\documentclass[12pt, reqno]{amsart}
\UseRawInputEncoding
\usepackage{amsmath, amsthm, amscd, amsfonts, amssymb, graphicx, color}
\usepackage[bookmarksnumbered, colorlinks, plainpages]{hyperref}
\hypersetup{colorlinks=true,linkcolor=red, anchorcolor=green,
citecolor=cyan, urlcolor=red, filecolor=magenta, pdftoolbar=true}

\textheight 22.5truecm \textwidth 14.5truecm
\setlength{\oddsidemargin}{0.35in}\setlength{\evensidemargin}{0.35in}

\setlength{\topmargin}{-.5cm}

\newtheorem{theorem}{Theorem}[section]
\newtheorem{lemma}[theorem]{Lemma}
\newtheorem{proposition}[theorem]{Proposition}
\newtheorem{corollary}[theorem]{Corollary}
\theoremstyle{definition}

\theoremstyle{remark}
\newtheorem{remark}[theorem]{Remark}
\numberwithin{equation}{section}

\begin{document}

\setcounter{page}{1}

\title[$w_{\rho}$-orthogonality and $w_{\rho}$-parallelism]
{Characterizations of $w_{\rho}$-Birkhoff--James orthogonality and $w_{\rho}$-parallelism}

\author[F.~Kittaneh \MakeLowercase{and} A.~Zamani]
{Fuad Kittaneh$^{1}$ \MakeLowercase{and} Ali Zamani$^{2}$}

\address{$^1$Department of Mathematics, The University of Jordan, Amman, Jordan}
\email{fkitt@ju.edu.jo}

\address{$^2$Department of Mathematics, Farhangian University, Tehran, Iran
\&
School of Mathematics and Computer Sciences, Damghan University, P.O.BOX 36715-364, Damghan, Iran}
\email{zamani.ali85@yahoo.com}

\subjclass[2010]{46B20, 47A12, 47A20, 47A30, 47L05.}

\keywords{Operator radius, operator norm, numerical radius, Birkhoff--James orthogonality, parallelism.}
\begin{abstract}
We study the concepts of Birkhoff--James orthogonality and parallelism in Hilbert space operators, induced
by the operator radius norm $w_{\rho}(\cdot)$. In particular, we completely characterize
Birkhoff--James orthogonality and parallelism with respect to
$w_{\rho}(\cdot)$. As an application of the results presented, we obtain a well-known
characterization due to R.~Bhatia and P.~\v{S}emrl for the classical Birkhoff--James orthogonality
of Hilbert space operators. Some other related results are also discussed.
\end{abstract} \maketitle
\section{Introduction}\label{section.1}
Let $\big(\mathcal{H}, \langle \cdot, \cdot\rangle\big)$ be a complex Hilbert space equipped with the norm $\|\!\cdot\!\|$,
and let $\mathbf{S}_{\mathcal{H}}$ denote the unit ball of $\mathcal{H}$, i.e.,
$\mathbf{S}_{\mathcal{H}}=\big\{x\in \mathcal{H}: \,\|x\|\leq1\big\}$.
Let $\mathbb{B}(\mathcal{H})$ be the $C^{\ast}$-algebra of all bounded linear
operators on $\mathcal{H}$. For $\rho>0$ an operator $A\in\mathbb{B}(\mathcal{H})$ is
called a $\rho$-contraction (see \cite{Nagy.Foias.1966}) if there is a Hilbert space $\mathcal{K}(\supseteq \mathcal{H})$
and a unitary operator $U$ on $\mathcal{K}$
such that $A^nx = \rho PU^nx$ for all $x\in\mathcal{H}, n= 1, 2, \cdots$,
where $P$ is the orthogonal projection from $\mathcal{K}$ to $\mathcal{H}$.
Holbrook \cite{Holbrook.1968} and Williams \cite{Williams.1968} defined the operator
radii $w_{\rho}(\cdot)$ as the generalized Minkowski distance functionals
on $\mathbb{B}(\mathcal{H})$, i.e.,
\begin{align*}
w_{\rho}(A)=\inf\left\{t>0: \,t^{-1}A \,\,\mbox{is a $\rho$-contraction}\right\}.
\end{align*}
The operator radius $w_{\rho}(\cdot)$, usually referred to in the literature as the $\rho$-radius, plays a very important role in the study of unitary
$\rho$-dilations (see, e.g., \cite{Nagy.Foias.Bercovici.Kerchy.2010}).
It is well known that $w_{\rho}(A^*) =w_{\rho}(A)$ and $w_{\rho}(U^*AU) =w_{\rho}(A)$ for all $A$ and all unitary $U\in\mathbb{B}(\mathcal{H})$
i.e., $w_{\rho}(\cdot)$ is, respectively, self-adjoint and weakly unitarily invariant.
Moreover, $\rho$-radii have the properties:
\begin{align*}
w_1(A) = \|A\|,
\end{align*}
where $\|\!\cdot\!\|$ is the Hilbert space operator norm, that is, $\|A\|= \sup\big\{\|Ax\|:\,x\in\mathbf{S}_{\mathcal{H}}\big\}$ and
\begin{align*}
w_2(A) = w(A),
\end{align*}
where $w(\cdot)$ is the numerical radius, that is,
$w(A) = \sup\big\{|\langle Ax, x\rangle|:\,x\in\mathbf{S}_{\mathcal{H}}\big\}$.
For every $A\in\mathbb{B}(\mathcal{H})$, we also have
\begin{align}\label{I.12.4}
\frac{1}{\rho}\|A\|\leq w_{\rho}(A)\leq \|A\|.
\end{align}
If $A$ is normal (i.e., $A^*A=AA^*$), then
$w_{\rho}(A)=\begin{cases}
(2\rho^{-1}-1)\|A\| &\text{if\, $0<\rho<1$}\\
\|A\| &\text{if\, $\rho\geq1$}
\end{cases}$
and if $A$ is $2$-nilpotent (i.e., $A^2=0$), then $w_{\rho}(A)=\frac{1}{\rho}\|A\|$.
Notice that there is a major difference between
the case when $0 < \rho \leq 2$ and $2 < \rho < \infty$.
It is known that for $\rho\in (0, 2]$, $w_{\rho}(\cdot)$ is a norm on $\mathbb{B}(\mathcal{H})$
but for $\rho\in (2, \infty)$ is only a quasi-norm.
For proofs and more facts about the operator radii, we refer the reader to
\cite{Ando.Li.2010, Holbrook.1968, Holbrook.1971, Nagy.Foias.Bercovici.Kerchy.2010, Okubo.Ando.1975, Okubo.Ando.1976, Williams.1968}.

Now, let $\rho\in (0, 2]$ and $A, B\in \mathbb{B}(\mathcal{H})$.
We say that $A$ is $w_{\rho}$-Birkhoff--James orthogonal to $B$ (see \cite{Birkhoff, James}), in short $A\perp_{w_{\rho}}B$, if
\begin{align*}
w_{\rho}(A+\gamma B)\geq w_{\rho}(A) \quad \mbox{for all $\gamma \in \mathbb{C}$},
\end{align*}
or equivalently,
\begin{align*}
\displaystyle{\min_{\gamma \in \mathbb{C}}}\,w_{\rho}(A+\gamma B) = w_{\rho}(A).
\end{align*}
In particular, when $\rho = 1$ and $\rho = 2$, we obtain, respectively, the definitions of
the classical Birkhoff-–James orthogonality (written $A\perp B$)
and the numerical radius orthogonality (written $A\perp_{w}B$) in $\mathbb{B}(\mathcal{H})$.
The Birkhoff-–James orthogonality plays a very crucial role in the geometry
of Hilbert space operators, see \cite{A.G.K.R.Z.JMAA.2019, A.R.AFA.2014, B.S.LAA.1999,
Ch.Wo.2018, CH.Ki.JMAA.2021, E.M.P.2021, G.S.P.AOT.2017, Ke.JOP.2004, Ke.PAMS.2005,
Kim.Lee.LAA.1999, KO.Sa.TA.JMAA.2019, Magajna.LAA.2022, Pal.Roy.JMAA.2024, Poon.JMAA.2024, Roy.LAA.2022, Sin.2021, Turn.LAA.2017, Wo.2017, Wo.Zam.LAMA.2023}
and the references therein.

Characterizations of the Birkhoff-–James orthogonality for operators on various Banach spaces
and elements of an arbitrary Hilbert $C^*$-module were given in
\cite{A.R, B.G, E.A.Z.MED.2022, Gro.LAMA.2017, Gro.Sin.LAMA.2023, Mal.Paul.Sen.MM.2022, M.Z.CMB.2017, Sai.Mal.Paul.2018, Wo.OP.2016, ZAM.AFA.2019, Z.W.AFA.2020}.

Furthermore, we say that $A$ is $w_{\rho}$-parallel to $B$ (see \cite{Sed.LAA.2007}), and we write $A\parallel_{w_{\rho}}B$, if
\begin{align*}
w_{\rho}(A+\lambda B)= w_{\rho}(A) + w_{\rho}(B) \quad \mbox{for some $\lambda \in \mathbb{T}$},
\end{align*}
or equivalently,
\begin{align*}
\displaystyle{\max_{\lambda \in \mathbb{T}}}\,w_{\rho}(A+\lambda B) = w_{\rho}(A) + w_{\rho}(B).
\end{align*}
Here, as usual, $\mathbb{T}$ is the unit circle of the complex plane.
For $\rho = 1$ and $\rho = 2$, we also obtain, respectively, the definitions of
the the operator norm parallelism (written $A\parallel B$)
and the numerical radius parallelism (written $A\parallel_{w}B$) in $\mathbb{B}(\mathcal{H})$.
The concept of parallelism plays a significant role in the study of the geometric and the analytic properties of Banach space,
for instance, see \cite{BCMWZ-2019, M.S.P.BJMA.2019, Ray.BLMS.2022, Sed.LAA.2007, Z.M.I.M.2016, Z.M.C.N.LAMA.2019}.

Some other authors studied different aspects of parallelism
of bounded linear operators and elements of an arbitrary Hilbert $C^*$-module, see
\cite{Meh.Amy.Zam.BIMS.2020, Woj.IND.2017,
Zam.LAA.2016, Zam.P.2019, Z.M.C.M.B.2015, Z.M.I.M.2016}
and the references therein.

Motivated by these, in this paper we explore the $w_{\rho}$-Birkhoff--James orthogonality and the $w_{\rho}$-parallelism on $\mathbb{B}(\mathcal{H})$.
In particular, we present characterizations of the $w_{\rho}$-Birkhoff--James orthogonality and the $w_{\rho}$-parallelism.
Our results extend some previously known results which appeared in the literature \cite{B.S.LAA.1999, Mal.Paul.Sen.MM.2022, Meh.Amy.Zam.BIMS.2020, Z.M.C.M.B.2015}.
\section{The $w_{\rho}$-Birkhoff--James orthogonality}\label{section.2}
We start our work with the following proposition, which contains some basic properties of the relation $\perp_{w_{\rho}}$.
\begin{proposition}\label{P.001}
Let $A, B \in \mathbb{B}(\mathcal{H})$ and $\rho\in (0, 2]$. The following conditions are mutually equivalent:
\begin{itemize}
\item[(i)] $A\perp_{w_{\rho}}B$,
\item[(ii)] $A^*\perp_{w_{\rho}}B^*$,
\item[(iii)] $\alpha A\perp_{w_{\rho}}\beta B$ for all $\alpha, \beta\in \mathbb{C}\setminus\{0\}$,
\item[(iv)] $U^*AU\perp_{w_{\rho}}U^*BU$ for all unitary $U\in\mathbb{B}(\mathcal{H})$.
\end{itemize}
\end{proposition}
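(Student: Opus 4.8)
The plan is to establish each equivalence by unwinding the definition of $w_{\rho}$-Birkhoff--James orthogonality and exploiting the two invariance properties of $w_{\rho}(\cdot)$ recorded in the introduction, namely that it is self-adjoint ($w_{\rho}(X^*)=w_{\rho}(X)$) and weakly unitarily invariant ($w_{\rho}(U^*XU)=w_{\rho}(X)$). Recall that $A\perp_{w_{\rho}}B$ means $w_{\rho}(A+\gamma B)\geq w_{\rho}(A)$ for every $\gamma\in\mathbb{C}$. Each of the transformations appearing in (ii), (iii), (iv) commutes appropriately with the formation of $A+\gamma B$, so the strategy in every case is to rewrite the defining inequality for the transformed pair and reduce it to the defining inequality for the original pair via a change of the scalar parameter $\gamma$.

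For (i)$\Leftrightarrow$(ii), I would take an arbitrary $\gamma\in\mathbb{C}$ and use $(A+\gamma B)^*=A^*+\overline{\gamma}B^*$ together with self-adjointness to obtain
\begin{align*}
w_{\rho}(A^*+\gamma B^*)=w_{\rho}\big((A+\overline{\gamma}B)^*\big)=w_{\rho}(A+\overline{\gamma}B).
\end{align*}
Since $\gamma\mapsto\overline{\gamma}$ is a bijection of $\mathbb{C}$, the family of inequalities defining $A^*\perp_{w_{\rho}}B^*$ is identical to the family defining $A\perp_{w_{\rho}}B$, giving the equivalence. The argument for (i)$\Leftrightarrow$(iv) is entirely parallel: for a fixed unitary $U$ one writes $U^*AU+\gamma\,U^*BU=U^*(A+\gamma B)U$, applies weak unitary invariance to both sides of the defining inequality, and concludes that the two orthogonality statements coincide.

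For (i)$\Leftrightarrow$(iii), I would fix $\alpha,\beta\in\mathbb{C}\setminus\{0\}$ and, for arbitrary $\gamma\in\mathbb{C}$, write $\alpha A+\gamma\beta B=\alpha\big(A+\tfrac{\gamma\beta}{\alpha}B\big)$. Using absolute homogeneity of the norm $w_{\rho}(\cdot)$ (valid here since $\rho\in(0,2]$), this equals $|\alpha|\,w_{\rho}\big(A+\tfrac{\gamma\beta}{\alpha}B\big)$, while $w_{\rho}(\alpha A)=|\alpha|\,w_{\rho}(A)$. Dividing through by $|\alpha|>0$ reduces the inequality $w_{\rho}(\alpha A+\gamma\beta B)\geq w_{\rho}(\alpha A)$ to $w_{\rho}\big(A+\tfrac{\gamma\beta}{\alpha}B\big)\geq w_{\rho}(A)$; since $\beta/\alpha\neq0$, the substitution $\gamma\mapsto\gamma\beta/\alpha$ is again a bijection of $\mathbb{C}$, so the transformed family of inequalities is exactly the original one.

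The only point that requires genuine care, rather than routine symbol-pushing, is confirming that $w_{\rho}(\cdot)$ is \emph{absolutely} homogeneous, i.e. $w_{\rho}(\mu X)=|\mu|\,w_{\rho}(X)$ for all $\mu\in\mathbb{C}$, as opposed to merely positively homogeneous; this is what allows the complex scalars $\alpha,\beta$ in (iii) to be absorbed and cancelled. This follows from the restriction $\rho\in(0,2]$, under which $w_{\rho}(\cdot)$ is a genuine norm, and can also be seen directly from the $\rho$-dilation definition by noting that multiplying a $\rho$-contraction by a unimodular scalar yields another $\rho$-contraction. I expect no serious obstacle beyond keeping track of these bijective reparametrizations of $\gamma$; the whole proposition is a direct consequence of the algebraic structure of $A+\gamma B$ and the invariance properties of $w_{\rho}(\cdot)$.
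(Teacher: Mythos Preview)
Your proposal is correct and follows precisely the approach indicated in the paper, which simply states that the result ``immediately follows from the definition of the relation $\perp_{w_{\rho}}$ and the properties of $w_{\rho}(\cdot)$''; your write-up is a careful unpacking of that sentence using self-adjointness, weak unitary invariance, and absolute homogeneity of $w_{\rho}(\cdot)$.
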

\begin{proof}
The proof immediately follows from the definition of the relation $\perp_{w_{\rho}}$ and the properties of $w_{\rho}(\cdot)$.
\end{proof}
\begin{remark}\label{R.001.1}
Let $A, B\in \mathbb{B}(\mathcal{H})$.

(i) If $\rho\in (0, 2]$ and $A$ is $2$-nilpotent, then $w_{\rho}(A)=\frac{1}{\rho}\|A\|$, and so the condition
$A\perp B$ implies $A\perp_{w_{\rho}}B$. Indeed, for every $\gamma\in \mathbb{C}$, by \eqref{I.12.4} it follows that
\begin{align*}
w_{\rho}(A+\gamma B)\geq \frac{1}{\rho}\|A+\gamma B\| \geq \frac{1}{\rho}\|A\| = w_{\rho}(A).
\end{align*}
(ii) If $\rho\in [1, 2]$ and $A$ is normal, then $w_{\rho}(A)=\|A\|$, and hence the condition
$A\perp_{w_{\rho}}B$ implies $A\perp B$. Indeed, for every $\gamma\in \mathbb{C}$, again by \eqref{I.12.4}, we have
\begin{align*}
\|A+\gamma B\|\geq w_{\rho}(A+\gamma B) \geq w_{\rho}(A)=\|A\|.
\end{align*}
\end{remark}
In order to prove our desired characterization of the $w_{\rho}$-Birkhoff--James orthogonality,
we need the following lemmas.
The first lemma has been proved recently in \cite[Theorem~3.1]{K.Z.LAA.2024}.
\begin{lemma}\label{L.001}
Let $X \in \mathbb{B}(\mathcal{H})$ and $\rho\in (0, 2]$. Then
\begin{align*}
w_{\rho}(X)= \frac{2}{\rho}\,w\left(\begin{bmatrix}
0 & \sqrt{\rho(2-\rho)}X \\
0 & (1-\rho)X
\end{bmatrix}\right).
\end{align*}
\end{lemma}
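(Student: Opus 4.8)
The plan is to deduce the identity from the classical numerical-range representation of the operator radius together with a direct computation of the numerical radius of the $2\times2$ operator matrix on the right-hand side. Write $a=\sqrt{\rho(2-\rho)}$ and $b=1-\rho$, so that $a\ge0$ and $a^2+b^2=\rho(2-\rho)+(1-\rho)^2=1$, and set
\begin{align*}
T=\begin{bmatrix} 0 & aX \\ 0 & bX \end{bmatrix}\in\mathbb{B}(\mathcal{H}\oplus\mathcal{H}).
\end{align*}
The starting point is the classical supremum representation of $w_{\rho}(\cdot)$ valid for $\rho\in(0,2]$ (see, e.g., \cite{Ando.Li.2010, Holbrook.1968, Williams.1968}), namely
\begin{align*}
w_{\rho}(X)=\frac{1}{\rho}\,\sup_{\|x\|=1}\Big[\,|b|\,|\langle Xx,x\rangle|+\sqrt{a^2\|Xx\|^2+b^2|\langle Xx,x\rangle|^2}\,\Big].
\end{align*}
Granting this, it suffices to prove that $w(T)$ equals exactly one half of the supremum appearing above, since then $w_{\rho}(X)=\tfrac{1}{\rho}\cdot 2\,w(T)=\tfrac{2}{\rho}w(T)$.

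The next step is to compute $w(T)$ by expanding the numerical range over unit vectors of $\mathcal{H}\oplus\mathcal{H}$. For $(x,y)$ with $\|x\|^2+\|y\|^2=1$ one has $T(x,y)=(aXy,\,bXy)$, and since $a,b$ are real,
\begin{align*}
\langle T(x,y),(x,y)\rangle=a\langle Xy,x\rangle+b\langle Xy,y\rangle=\langle Xy,\,ax+by\rangle.
\end{align*}
I would then fix $y$ (hence fix $\|y\|=s$ and the direction of $y$) and optimize over $x$ subject to $\|x\|=\sqrt{1-s^2}$. By the triangle inequality $|a\langle Xy,x\rangle+b\langle Xy,y\rangle|\le a\|Xy\|\sqrt{1-s^2}+|b|\,|\langle Xy,y\rangle|$, and equality is attained by taking $x$ parallel to $Xy$ with the phase chosen so that the two summands align. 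Writing $y=s\hat y$ with $\|\hat y\|=1$ and setting $p=|\langle X\hat y,\hat y\rangle|$, $q=\|X\hat y\|$, so that $\|Xy\|=sq$ and $|\langle Xy,y\rangle|=s^2p$, this reduces the computation to
\begin{align*}
w(T)=\sup_{\|\hat y\|=1}\ \max_{s\in[0,1]}\Big[\,|b|\,p\,s^2+a\,q\,s\sqrt{1-s^2}\,\Big].
\end{align*}

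Finally I would carry out the inner one-variable maximization. Substituting $s=\sin\psi$ with $\psi\in[0,\tfrac{\pi}{2}]$ turns the bracket into $\tfrac{|b|p}{2}+\tfrac12\big(aq\sin2\psi-|b|p\cos2\psi\big)$, a sinusoid in $2\psi\in[0,\pi]$ whose maximum is attained in the admissible range (because $aq\ge0$ and $|b|p\ge0$) and equals $\tfrac12\big(|b|p+\sqrt{a^2q^2+b^2p^2}\big)$. Taking the supremum over unit $\hat y$ gives
\begin{align*}
w(T)=\frac12\,\sup_{\|\hat y\|=1}\Big[\,|b|\,|\langle X\hat y,\hat y\rangle|+\sqrt{a^2\|X\hat y\|^2+b^2|\langle X\hat y,\hat y\rangle|^2}\,\Big],
\end{align*}
which is precisely half of the $w_{\rho}$-supremum, completing the proof. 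The endpoint cases $\rho=1$ (where $b=0$, $a=1$, giving $w(T)=\tfrac12\|X\|$) and $\rho=2$ (where $a=0$, $|b|=1$, giving $w(T)=w(X)$) serve as consistency checks. I expect the main obstacle to be the reduction in the second and third steps: justifying that the phase can be chosen to make the triangle inequality sharp, and then tracking the joint optimization over the splitting $s=\|y\|$ and the direction of $y$, since it is exactly here that the identity $a^2+b^2=1$ forces the coefficients $\sqrt{\rho(2-\rho)}$ and $1-\rho$ to reconstitute the operator-radius supremum.
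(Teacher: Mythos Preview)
The paper does not prove this lemma at all; it simply quotes it as \cite[Theorem~3.1]{K.Z.LAA.2024}. So there is no ``paper's own proof'' to compare against, and your proposal is in fact more self-contained than what appears here.

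Your argument is correct. The key external ingredient is the scalar supremum formula
\[
w_\rho(X)=\frac{1}{\rho}\sup_{\|x\|=1}\Big[\,|1-\rho|\,|\langle Xx,x\rangle|+\sqrt{\rho(2-\rho)\|Xx\|^2+(1-\rho)^2|\langle Xx,x\rangle|^2}\,\Big],
\]
which is indeed classical (it follows from the Holbrook--Williams characterization of $\rho$-contractions and appears, in equivalent form, in the references you cite). Once this is granted, your computation of $w(T)$ is clean: the Cauchy--Schwarz step optimizing over $x$ for fixed $y$ is legitimate because the phase of $x$ is free, so the triangle inequality can be saturated simultaneously with Cauchy--Schwarz; and the inner maximization over $s=\|y\|$ via $s=\sin\psi$ is just the amplitude of a sinusoid, with the maximizing angle $2\psi\in[0,\pi]$ guaranteed by $aq\ge0$, $|b|p\ge0$. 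The identity $a^2+b^2=1$ is exactly what makes the two suprema match.

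Two minor points worth tightening if you write this up: (1) make explicit that when $Xy=0$ the optimization over $x$ is trivial, and (2) state precisely which source contains the supremum formula in the form you use, since the various classical references phrase it differently.
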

Our second lemma reads as follows. Our approach is similar to the one given in \cite[Theorem~1]{Z.W.AFA.2020}.
\begin{lemma}\label{L.002}
Let $A, B \in \mathbb{B}(\mathcal{H})$ and $\rho\in (0, 2]$. The following conditions are equivalent:
\begin{itemize}
\item[(i)] $w_{\rho}(A+rB)\geq w_{\rho}(A)$ for all $r\geq0$,
\item[(ii)] there exists a sequence $\left\{\begin{bmatrix}
x_n\\
y_n
\end{bmatrix}\right\}$ in $\mathbf{S}_{\mathcal{H}\oplus \mathcal{H}}$ such that
\begingroup\makeatletter\def\f@size{11}\check@mathfonts
\begin{align*}
\displaystyle{\lim_{n\rightarrow \infty}}\left|\left\langle Ay_n, \sqrt{\frac{8-4\rho}{\rho}}x_n+\frac{2-2\rho}{\rho}y_n\right\rangle\right|=w_{\rho}(A)
\end{align*}
\endgroup
and
\begingroup\makeatletter\def\f@size{10}\check@mathfonts
\begin{align*}
\displaystyle{\lim_{n\rightarrow \infty}}{\rm Re}\left(
\left\langle \sqrt{\rho(2-\rho)}x_n + (1-\rho)y_n, Ay_n\right\rangle
\left\langle By_n, \sqrt{\rho(2-\rho)}x_n + (1-\rho)y_n\right\rangle\right)\geq0.
\end{align*}
\endgroup
\end{itemize}
\end{lemma}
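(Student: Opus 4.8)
The plan is to reduce the statement, via Lemma~\ref{L.001}, to a one-sided numerical-radius orthogonality condition and then to prove the latter by an approximation argument. Writing $T_X := \begin{bmatrix} 0 & \sqrt{\rho(2-\rho)}\,X \\ 0 & (1-\rho)X \end{bmatrix}$, Lemma~\ref{L.001} gives $w_{\rho}(X) = \tfrac{2}{\rho}\,w(T_X)$, and since $X\mapsto T_X$ is linear we have $T_{A+rB}=T_A+r\,T_B$; hence (i) is equivalent to $w(T_A+r\,T_B)\geq w(T_A)$ for all $r\geq 0$. A direct computation shows that for a unit vector $\xi=\begin{bmatrix} x \\ y \end{bmatrix}\in\mathbf{S}_{\mathcal{H}\oplus\mathcal{H}}$ one has $\langle T_A\xi,\xi\rangle=\sqrt{\rho(2-\rho)}\,\langle Ay,x\rangle+(1-\rho)\langle Ay,y\rangle$ and $\langle T_B\xi,\xi\rangle=\langle By,\,\sqrt{\rho(2-\rho)}\,x+(1-\rho)y\rangle$. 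Since $\sqrt{\tfrac{8-4\rho}{\rho}}=\tfrac{2}{\rho}\sqrt{\rho(2-\rho)}$ and $\tfrac{2-2\rho}{\rho}=\tfrac{2}{\rho}(1-\rho)$, the first quantity displayed in (ii) equals $\tfrac{2}{\rho}\langle T_A\xi,\xi\rangle$, whereas the two factors inside the real part in the second display of (ii) equal exactly $\overline{\langle T_A\xi,\xi\rangle}$ and $\langle T_B\xi,\xi\rangle$. Consequently (ii) is equivalent to the existence of unit vectors $\xi_n\in\mathbf{S}_{\mathcal{H}\oplus\mathcal{H}}$ with $|\langle T_A\xi_n,\xi_n\rangle|\to w(T_A)$ (the factor $\tfrac{2}{\rho}$ cancelling against $w(T_A)=\tfrac{\rho}{2}w_{\rho}(A)$) and $\lim_n\mathrm{Re}\big(\overline{\langle T_A\xi_n,\xi_n\rangle}\,\langle T_B\xi_n,\xi_n\rangle\big)\geq 0$.

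It therefore suffices to establish, for arbitrary operators $S,R$ on a Hilbert space, that $w(S+rR)\geq w(S)$ for all $r\geq 0$ holds if and only if there exist unit vectors $\xi_n$ with $|\langle S\xi_n,\xi_n\rangle|\to w(S)$ and $\lim_n\mathrm{Re}\big(\overline{\langle S\xi_n,\xi_n\rangle}\,\langle R\xi_n,\xi_n\rangle\big)\geq 0$; I would then apply this with $S=T_A$ and $R=T_B$ on $\mathcal{H}\oplus\mathcal{H}$. The easy implication is immediate: for fixed $r\geq 0$, expanding $|\langle(S+rR)\xi_n,\xi_n\rangle|^2=|\langle S\xi_n,\xi_n\rangle|^2+2r\,\mathrm{Re}\big(\overline{\langle S\xi_n,\xi_n\rangle}\,\langle R\xi_n,\xi_n\rangle\big)+r^2|\langle R\xi_n,\xi_n\rangle|^2$ and taking $\liminf_n$ of both sides gives $w(S+rR)^2\geq w(S)^2$.

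For the converse I would probe the inequality for small $r$. For each $n$ set $r=\tfrac1n$ and choose a unit vector $\xi_n$ that nearly maximizes $\xi\mapsto|\langle(S+\tfrac1nR)\xi,\xi\rangle|^2$, namely with $|\langle(S+\tfrac1nR)\xi_n,\xi_n\rangle|^2>w(S+\tfrac1nR)^2-\tfrac{1}{n^2}\geq w(S)^2-\tfrac{1}{n^2}$. Inserting the expansion above and using the uniform bounds $|\langle S\xi_n,\xi_n\rangle|\leq w(S)$ and $|\langle R\xi_n,\xi_n\rangle|\leq w(R)$, the $n^{-2}$ slack forces, on the one hand, $|\langle S\xi_n,\xi_n\rangle|^2>w(S)^2-\tfrac{1}{n^2}-\tfrac{2}{n}w(S)w(R)-\tfrac{1}{n^2}w(R)^2$, whence $|\langle S\xi_n,\xi_n\rangle|\to w(S)$; and on the other hand $\mathrm{Re}\big(\overline{\langle S\xi_n,\xi_n\rangle}\,\langle R\xi_n,\xi_n\rangle\big)>-\tfrac{1}{2n}-\tfrac{1}{2n}w(R)^2$, so that the $\liminf$ of the real parts is nonnegative. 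Passing to a subsequence along which the (bounded) real parts converge then produces the required sequence with a genuine limit $\geq 0$, and translating back through Lemma~\ref{L.001} finishes the proof.

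The main obstacle is precisely this converse direction in infinite dimensions: the numerical radius need not be attained, so one cannot select an exact maximizer and must instead work with $n^{-2}$-approximate maximizers while carefully balancing the slack against the first-order term $2r\,\mathrm{Re}(\cdots)$. This is the reason the admissible error must be $o(r)$ (here $r^2$), and it is also why a final subsequence extraction is needed to upgrade a liminf bound into a bona fide limit; the reduction and the bookkeeping of coefficients are otherwise routine.
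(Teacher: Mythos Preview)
Your proof is correct and follows essentially the same strategy as the paper: reduce via Lemma~\ref{L.001} to a one-sided numerical-radius inequality on $\mathcal{H}\oplus\mathcal{H}$, expand $|\langle(S+rR)\xi,\xi\rangle|^2$, and probe with approximate maximizers at small positive parameters to force both conditions in (ii). Your execution is slightly tidier---you build a single sequence directly by taking $r_n=1/n$ and one $1/n^2$-approximate maximizer per $n$, and you make explicit the subsequence step upgrading a $\liminf$ to a genuine limit---but the underlying argument is the same.
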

\begin{proof}
(i)$\Rightarrow$(ii) Let $w_{\rho}(A+rB)\geq w_{\rho}(A)$ for all $r\geq0$. We may assume that $w_{\rho}(A)\neq0$
otherwise (ii) trivially holds. So there exists $\varepsilon_0\in(0, 1)$ such that $w_{\rho}(A)\geq\varepsilon^2$ for all
$\varepsilon\in(0, \varepsilon_0)$. Therefore,
\begin{align}\label{I.1.L.002}
w_{\rho}(A+\varepsilon B)\geq w_{\rho}(A)\geq w_{\rho}(A)-\varepsilon^2\geq 0 \qquad \big(0<\varepsilon<\varepsilon_0\big).
\end{align}
Let $\varepsilon\in(0, \varepsilon_0)$. By Lemma \ref{L.001} we have
\begin{align*}
w\left(\begin{bmatrix}
0 & \sqrt{\rho(2-\rho)}(A+\varepsilon B)\\
0 & (1-\rho)(A+\varepsilon B)
\end{bmatrix}\right)=\frac{\rho}{2}\,w_{\rho}(A+\varepsilon B),
\end{align*}
and hence there exists a sequence $\left\{\begin{bmatrix}
x_n\\
y_n
\end{bmatrix}\right\}$ in $\mathbf{S}_{\mathcal{H}\oplus \mathcal{H}}$ such that
\begin{align}\label{I.2.L.002}
\displaystyle{\lim_{n\rightarrow \infty}}\left|\left\langle \begin{bmatrix}
0 & \sqrt{\rho(2-\rho)}(A+\varepsilon B)\\
0 & (1-\rho)(A+\varepsilon B)
\end{bmatrix}\begin{bmatrix}
x_n\\
y_n
\end{bmatrix}, \begin{bmatrix}
x_n\\
y_n
\end{bmatrix}\right\rangle\right|=\frac{\rho}{2}\,w_{\rho}(A+\varepsilon B).
\end{align}
Utilizing Lemma \ref{L.001}, \eqref{I.1.L.002} and \eqref{I.2.L.002}, we have
\begingroup\makeatletter\def\f@size{11}\check@mathfonts
\begin{align*}
\frac{\rho}{2}\,w_{\rho}(A) + \varepsilon\frac{\rho}{2}\,w_{\rho}(B)&= w\left(\begin{bmatrix}
0 & \sqrt{\rho(2-\rho)}A \\
0 & (1-\rho)A
\end{bmatrix}\right)+ \varepsilon w\left(\begin{bmatrix}
0 & \sqrt{\rho(2-\rho)}B \\
0 & (1-\rho)B
\end{bmatrix}\right)
\\& \geq
\displaystyle{\lim_{n\rightarrow \infty}}\left|\left\langle \begin{bmatrix}
0 & \sqrt{\rho(2-\rho)}A\\
0 & (1-\rho)A
\end{bmatrix}\begin{bmatrix}
x_n\\
y_n
\end{bmatrix}, \begin{bmatrix}
x_n\\
y_n
\end{bmatrix}\right\rangle\right|
\\& \qquad \qquad + \varepsilon\displaystyle{\lim_{n\rightarrow \infty}}\left|\left\langle \begin{bmatrix}
0 & \sqrt{\rho(2-\rho)}B\\
0 & (1-\rho)B
\end{bmatrix}\begin{bmatrix}
x_n\\
y_n
\end{bmatrix}, \begin{bmatrix}
x_n\\
y_n
\end{bmatrix}\right\rangle\right|
\\&\geq \displaystyle{\lim_{n\rightarrow \infty}}\left|\left\langle \begin{bmatrix}
0 & \sqrt{\rho(2-\rho)}(A+\varepsilon B)\\
0 & (1-\rho)(A+\varepsilon B)
\end{bmatrix}\begin{bmatrix}
x_n\\
y_n
\end{bmatrix}, \begin{bmatrix}
x_n\\
y_n
\end{bmatrix}\right\rangle\right|
\\& =\frac{\rho}{2}\,w_{\rho}(A+\varepsilon B)\geq \frac{\rho}{2}\,w_{\rho}(A),
\end{align*}
\endgroup
and so by letting $\varepsilon\rightarrow 0^+$ we obtain
\begingroup\makeatletter\def\f@size{11}\check@mathfonts
\begin{align*}
\displaystyle{\lim_{n\rightarrow \infty}}\left|\left\langle \begin{bmatrix}
0 & \sqrt{\rho(2-\rho)}A\\
0 & (1-\rho)A
\end{bmatrix}\begin{bmatrix}
x_n\\
y_n
\end{bmatrix}, \begin{bmatrix}
x_n\\
y_n
\end{bmatrix}\right\rangle\right|
=\frac{\rho}{2}\,w_{\rho}(A).
\end{align*}
\endgroup
This implies
\begingroup\makeatletter\def\f@size{11}\check@mathfonts
\begin{align*}
\displaystyle{\lim_{n\rightarrow \infty}}\left|\Big\langle \sqrt{\rho(2-\rho)}Ay_n, x_n\Big\rangle+ \Big\langle (1-\rho)Ay_n, y_n\Big\rangle\right|=\frac{\rho}{2}\,w_{\rho}(A),
\end{align*}
\endgroup
or equivalently,
\begingroup\makeatletter\def\f@size{11}\check@mathfonts
\begin{align*}
\displaystyle{\lim_{n\rightarrow \infty}}\left|\left\langle Ay_n, \sqrt{\frac{8-4\rho}{\rho}}x_n+\frac{2-2\rho}{\rho}y_n\right\rangle\right|=w_{\rho}(A).
\end{align*}
\endgroup
We also have
\begingroup\makeatletter\def\f@size{10}\check@mathfonts
\begin{align*}
&2\varepsilon\displaystyle{\lim_{n\rightarrow \infty}}{\rm Re}
\left(\left\langle \sqrt{\rho(2-\rho)}x_n + (1-\rho)y_n, Ay_n\right\rangle
\left\langle By_n, \sqrt{\rho(2-\rho)}x_n + (1-\rho)y_n\right\rangle\right)
\\& \qquad + \frac{\rho^2}{4}\,w^2_{\rho}(A) + \varepsilon^2\frac{\rho^2}{4}\,w_{\rho}(B)
\\&= 2\varepsilon\displaystyle{\lim_{n\rightarrow \infty}}{\rm Re}
\left(\left\langle \begin{bmatrix}
x_n\\
y_n
\end{bmatrix}, \begin{bmatrix}
0 & \sqrt{\rho(2-\rho)}A\\
0 & (1-\rho)A
\end{bmatrix}\begin{bmatrix}
x_n\\
y_n
\end{bmatrix}\right\rangle
\left\langle \begin{bmatrix}
0 & \sqrt{\rho(2-\rho)}B\\
0 & (1-\rho)B
\end{bmatrix}\begin{bmatrix}
x_n\\
y_n
\end{bmatrix}, \begin{bmatrix}
x_n\\
y_n
\end{bmatrix}\right\rangle
\right)
\\& \qquad +w^2\left(\begin{bmatrix}
0 & \sqrt{\rho(2-\rho)}A \\
0 & (1-\rho)A
\end{bmatrix}\right)+ \varepsilon^2 w^2\left(\begin{bmatrix}
0 & \sqrt{\rho(2-\rho)}B \\
0 & (1-\rho)B
\end{bmatrix}\right)
\\& \geq 2\varepsilon\displaystyle{\lim_{n\rightarrow \infty}}{\rm Re}
\left(\left\langle \begin{bmatrix}
x_n\\
y_n
\end{bmatrix}, \begin{bmatrix}
0 & \sqrt{\rho(2-\rho)}A\\
0 & (1-\rho)A
\end{bmatrix}\begin{bmatrix}
x_n\\
y_n
\end{bmatrix}\right\rangle
\left\langle \begin{bmatrix}
0 & \sqrt{\rho(2-\rho)}B\\
0 & (1-\rho)B
\end{bmatrix}\begin{bmatrix}
x_n\\
y_n
\end{bmatrix}, \begin{bmatrix}
x_n\\
y_n
\end{bmatrix}\right\rangle
\right)
\\& \quad +\displaystyle{\lim_{n\rightarrow \infty}}\left|\left\langle \begin{bmatrix}
0 & \sqrt{\rho(2-\rho)}A\\
0 & (1-\rho)A
\end{bmatrix}\begin{bmatrix}
x_n\\
y_n
\end{bmatrix}, \begin{bmatrix}
x_n\\
y_n
\end{bmatrix}\right\rangle\right|^2
+ \varepsilon^2\displaystyle{\lim_{n\rightarrow \infty}}\left|\left\langle \begin{bmatrix}
0 & \sqrt{\rho(2-\rho)}B\\
0 & (1-\rho)B
\end{bmatrix}\begin{bmatrix}
x_n\\
y_n
\end{bmatrix}, \begin{bmatrix}
x_n\\
y_n
\end{bmatrix}\right\rangle\right|^2
\\&= \displaystyle{\lim_{n\rightarrow \infty}}\left|\left\langle \begin{bmatrix}
0 & \sqrt{\rho(2-\rho)}(A+\varepsilon B)\\
0 & (1-\rho)(A+\varepsilon B)
\end{bmatrix}\begin{bmatrix}
x_n\\
y_n
\end{bmatrix}, \begin{bmatrix}
x_n\\
y_n
\end{bmatrix}\right\rangle\right|^2
\\& =\frac{\rho^2}{4}\,w^2_{\rho}(A+\varepsilon B)\geq \frac{\rho^2}{4} \left(w_{\rho}(A)-\varepsilon^2\right)^2
= \frac{\rho^2}{4}\,w^2_{\rho}(A) - \varepsilon^2\,\frac{\rho^2}{2} w_{\rho}(A)+\varepsilon^4\,\frac{\rho^2}{4},
\end{align*}
\endgroup
which yields
\begingroup\makeatletter\def\f@size{10}\check@mathfonts
\begin{align}\label{I.3.L.002}
&\displaystyle{\lim_{n\rightarrow \infty}}{\rm Re}
\left(\left\langle \sqrt{\rho(2-\rho)}x_n + (1-\rho)y_n, Ay_n\right\rangle
\left\langle By_n, \sqrt{\rho(2-\rho)}x_n + (1-\rho)y_n\right\rangle\right)\nonumber
\\& \qquad \geq \varepsilon^3\,\frac{\rho^2}{8} - \varepsilon\,\frac{\rho^2}{4} w_{\rho}(A)
- \varepsilon\frac{\rho^2}{8}\,w^2_{\rho}(B).
\end{align}
\endgroup
Finally, by letting $\varepsilon\rightarrow 0^+$ in \eqref{I.3.L.002}, we conclude that
\begingroup\makeatletter\def\f@size{10}\check@mathfonts
\begin{align*}
\displaystyle{\lim_{n\rightarrow \infty}}{\rm Re}\left(
\left\langle \sqrt{\rho(2-\rho)}x_n + (1-\rho)y_n, Ay_n\right\rangle
\left\langle By_n, \sqrt{\rho(2-\rho)}x_n + (1-\rho)y_n\right\rangle\right)\geq0.
\end{align*}
\endgroup
(ii)$\Rightarrow$(i) Suppose (ii) holds. For any $r\geq0$, by Lemma \ref{L.001} we have
\begingroup\makeatletter\def\f@size{10}\check@mathfonts
\begin{align*}
&w^2_{\rho}(A+rB) = \frac{4}{\rho^2}\,w\left(\begin{bmatrix}
0 & \sqrt{\rho(2-\rho)}(A+rB) \\
0 & (1-\rho)(A+rB)
\end{bmatrix}\right)
\\&\quad \geq \frac{4}{\rho^2}\,\displaystyle{\lim_{n\rightarrow \infty}}\left|\left\langle \begin{bmatrix}
0 & \sqrt{\rho(2-\rho)}(A+r B)\\
0 & (1-\rho)(A+r B)
\end{bmatrix}\begin{bmatrix}
x_n\\
y_n
\end{bmatrix}, \begin{bmatrix}
x_n\\
y_n
\end{bmatrix}\right\rangle\right|^2
\\& \quad = \displaystyle{\lim_{n\rightarrow \infty}}\left|\left\langle Ay_n, \sqrt{\frac{8-4\rho}{\rho}}x_n+\frac{2-2\rho}{\rho}y_n\right\rangle\right|^2
\\& \qquad + \frac{8}{\rho^2}\,\displaystyle{\lim_{n\rightarrow \infty}}{\rm Re}\left(
\left\langle \sqrt{\rho(2-\rho)}x_n + (1-\rho)y_n, Ay_n\right\rangle
\left\langle By_n, \sqrt{\rho(2-\rho)}x_n + (1-\rho)y_n\right\rangle\right)
\\& \quad \qquad + r^2\displaystyle{\lim_{n\rightarrow \infty}}\left|\left\langle By_n, \sqrt{\frac{8-4\rho}{\rho}}x_n+\frac{2-2\rho}{\rho}y_n\right\rangle\right|^2\geq w^2_{\rho}(A),
\end{align*}
\endgroup
which implies $w_{\rho}(A+rB)\geq w_{\rho}(A)$ and the proof is completed.
\end{proof}
We are now in a position to establish the main result of this section.
\begin{theorem}\label{T.002}
Let $A, B \in \mathbb{B}(\mathcal{H})$ and $\rho\in (0, 2]$. The following conditions are equivalent:
\begin{itemize}
\item[(i)] $A\perp_{w_{\rho}}B$,
\item[(ii)] for each $\theta\in [0, 2\pi)$, there exists a sequence $\left\{\begin{bmatrix}
x_n\\
y_n
\end{bmatrix}\right\}$ in $\mathbf{S}_{\mathcal{H}\oplus \mathcal{H}}$ such that
\begingroup\makeatletter\def\f@size{11}\check@mathfonts
\begin{align*}
\displaystyle{\lim_{n\rightarrow \infty}}\left|\left\langle Ay_n, \sqrt{\frac{8-4\rho}{\rho}}x_n+\frac{2-2\rho}{\rho}y_n\right\rangle\right|=w_{\rho}(A)
\end{align*}
\endgroup
and
\begingroup\makeatletter\def\f@size{10}\check@mathfonts
\begin{align*}
\displaystyle{\lim_{n\rightarrow \infty}}{\rm Re}
\left(e^{i\theta}\left\langle \sqrt{\rho(2-\rho)}x_n + (1-\rho)y_n, Ay_n\right\rangle
\left\langle By_n, \sqrt{\rho(2-\rho)}x_n + (1-\rho)y_n\right\rangle\right)\geq0.
\end{align*}
\endgroup
\end{itemize}
\end{theorem}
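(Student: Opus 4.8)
The plan is to reduce the complex scalar $\gamma$ appearing in the definition of $\perp_{w_{\rho}}$ to polar form and then apply Lemma~\ref{L.002} one phase at a time. Writing an arbitrary $\gamma \in \mathbb{C}$ as $\gamma = r e^{i\theta}$ with $r \geq 0$ and $\theta \in [0, 2\pi)$, we have $w_{\rho}(A + \gamma B) = w_{\rho}\big(A + r(e^{i\theta}B)\big)$. Consequently, $A \perp_{w_{\rho}} B$ holds precisely when, for every $\theta \in [0, 2\pi)$, the inequality $w_{\rho}\big(A + r(e^{i\theta}B)\big) \geq w_{\rho}(A)$ is valid for all $r \geq 0$; equivalently, condition (i) of Lemma~\ref{L.002} holds for the pair $(A, e^{i\theta}B)$ for each fixed $\theta$.

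The central step is to invoke Lemma~\ref{L.002} with $B$ replaced by $e^{i\theta}B$. Since $e^{i\theta}B \in \mathbb{B}(\mathcal{H})$, the lemma furnishes, for each fixed $\theta$, a sequence in $\mathbf{S}_{\mathcal{H}\oplus\mathcal{H}}$ satisfying the two limit conditions in which $B$ is replaced by $e^{i\theta}B$. The first condition involves only $A$ and is therefore identical to the one appearing in the theorem. In the second condition, sesquilinearity of the inner product allows the phase to be extracted: because $\big\langle e^{i\theta}By_n, z\big\rangle = e^{i\theta}\big\langle By_n, z\big\rangle$, the factor $e^{i\theta}$ passes outside the product, yielding exactly the phase-weighted real-part expression in condition (ii). This establishes the implication (i)$\Rightarrow$(ii); reading the same chain of equivalences backward gives (ii)$\Rightarrow$(i).

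The only delicate point is the bookkeeping of quantifiers. In Lemma~\ref{L.002} the approximating sequence depends on the ambient operators, so applying the lemma to $(A, e^{i\theta}B)$ produces a sequence that depends on $\theta$; this is exactly why condition (ii) is stated in the form ``for each $\theta$, there exists a sequence.'' Apart from tracking this $\theta$-dependence and pulling the scalar $e^{i\theta}$ through the inner product, I expect no real difficulty, since all of the analytic content---the passage through Lemma~\ref{L.001} and the limiting argument in the perturbation parameter $\varepsilon$---has already been absorbed into Lemma~\ref{L.002}.
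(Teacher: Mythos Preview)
Your proposal is correct and follows essentially the same route as the paper: fix $\theta$, apply Lemma~\ref{L.002} to the pair $(A,e^{i\theta}B)$, and extract the phase $e^{i\theta}$ from the second inner product. The only cosmetic difference is that for (ii)$\Rightarrow$(i) the paper redoes the computation from the proof of Lemma~\ref{L.002} rather than simply invoking the equivalence in that lemma, as you (more cleanly) do.
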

\begin{proof}
(i)$\Rightarrow$(ii) Let $A\perp_{w_{\rho}}B$ and let $\theta\in [0, 2\pi)$ be fixed.
Thus, $w_{\rho}(A+re^{i\theta} B)\geq w_{\rho}(A)$ for all $r\geq0$.
By Lemma \ref{L.002} there exists a sequence $\left\{\begin{bmatrix}
x_n\\
y_n
\end{bmatrix}\right\}$ in $\mathbf{S}_{\mathcal{H}\oplus \mathcal{H}}$ such that
\begingroup\makeatletter\def\f@size{11}\check@mathfonts
\begin{align*}
\displaystyle{\lim_{n\rightarrow \infty}}\left|\left\langle Ay_n, \sqrt{\frac{8-4\rho}{\rho}}x_n+\frac{2-2\rho}{\rho}y_n\right\rangle\right|=w_{\rho}(A)
\end{align*}
\endgroup
and
\begingroup\makeatletter\def\f@size{10}\check@mathfonts
\begin{align*}
\displaystyle{\lim_{n\rightarrow \infty}}{\rm Re}\left(\left\langle \sqrt{\rho(2-\rho)}x_n + (1-\rho)y_n, Ay_n\right\rangle
\left\langle e^{i\theta}By_n, \sqrt{\rho(2-\rho)}x_n + (1-\rho)y_n\right\rangle\right)\geq0,
\end{align*}
\endgroup
and hence we deduce (ii).

(ii)$\Rightarrow$(i) Suppose (ii) holds. Let $\gamma\in \mathbb{C}$.
Then $\gamma=e^{i\theta}|\gamma|$ for some $\theta\in [0, 2\pi)$.
So, there exists a sequence $\left\{\begin{bmatrix}
x_n\\
y_n
\end{bmatrix}\right\}$ in $\mathbf{S}_{\mathcal{H}\oplus \mathcal{H}}$ such that
\begingroup\makeatletter\def\f@size{11}\check@mathfonts
\begin{align*}
\displaystyle{\lim_{n\rightarrow \infty}}\left|\left\langle Ay_n, \sqrt{\frac{8-4\rho}{\rho}}x_n+\frac{2-2\rho}{\rho}y_n\right\rangle\right|=w_{\rho}(A)
\end{align*}
\endgroup
and
\begingroup\makeatletter\def\f@size{10}\check@mathfonts
\begin{align*}
\displaystyle{\lim_{n\rightarrow \infty}}{\rm Re}
\left(e^{i\theta}\left\langle \sqrt{\rho(2-\rho)}x_n + (1-\rho)y_n, Ay_n\right\rangle
\left\langle By_n, \sqrt{\rho(2-\rho)}x_n + (1-\rho)y_n\right\rangle\right)\geq0.
\end{align*}
\endgroup
Utilizing a similar argument as in Lemma \ref{L.002}, we get $w_{\rho}(A+|\gamma|e^{i\theta}B)\geq w_{\rho}(A)$.
Thus, $w_{\rho}(A+\gamma B)\geq w_{\rho}(A)$, or equivalently, $A\perp_{w_{\rho}}B$.
\end{proof}
When $\mathcal{H}$ is a finite dimensional Hilbert space, we have the following result.
\begin{corollary}\label{C.002.1}
Let $\mathcal{H}$ be a finite dimensional Hilbert space, and let $A, B \in \mathbb{B}(\mathcal{H})$. For every $\rho\in (0, 2]$, the following conditions are equivalent:
\begin{itemize}
\item[(i)] $A\perp_{w_{\rho}}B$,
\item[(ii)] for each $\theta\in [0, 2\pi)$, there exists a vector $\begin{bmatrix}
x\\
y
\end{bmatrix}\in \mathbf{S}_{\mathcal{H}\oplus \mathcal{H}}$ such that
\begingroup\makeatletter\def\f@size{11}\check@mathfonts
\begin{align*}
\left|\left\langle Ay, \sqrt{\frac{8-4\rho}{\rho}}x+\frac{2-2\rho}{\rho}y\right\rangle\right|=w_{\rho}(A)
\end{align*}
\endgroup
and
\begingroup\makeatletter\def\f@size{10}\check@mathfonts
\begin{align*}
{\rm Re}\left(e^{i\theta}\left\langle \sqrt{\rho(2-\rho)}x + (1-\rho)y, Ay\right\rangle
\left\langle By, \sqrt{\rho(2-\rho)}x + (1-\rho)y\right\rangle\right)\geq0.
\end{align*}
\endgroup
\end{itemize}
\end{corollary}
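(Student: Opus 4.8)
The plan is to derive the Corollary from Theorem~\ref{T.002} by exploiting the compactness of the unit sphere $\mathbf{S}_{\mathcal{H}\oplus\mathcal{H}}$ in finite dimensions, which lets the limiting sequences appearing in Theorem~\ref{T.002} be replaced by genuine extremal vectors. In other words, Corollary~\ref{C.002.1} should be nothing more than a compactness refinement of Theorem~\ref{T.002}, with ``$\lim_{n\to\infty}$'' upgraded to an attained value.

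First I would dispose of the direction (ii)$\Rightarrow$(i), which is essentially free. If (ii) of the Corollary holds, then for each $\theta$ the single vector $\begin{bmatrix} x\\ y\end{bmatrix}$ may be regarded as a constant sequence in $\mathbf{S}_{\mathcal{H}\oplus\mathcal{H}}$. This constant sequence trivially satisfies condition (ii) of Theorem~\ref{T.002} (each limit is attained term by term), and therefore Theorem~\ref{T.002} immediately yields $A\perp_{w_{\rho}}B$.

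The substantive direction is (i)$\Rightarrow$(ii). Assuming $A\perp_{w_{\rho}}B$, Theorem~\ref{T.002} supplies, for each fixed $\theta\in[0,2\pi)$, a sequence $\left\{\begin{bmatrix} x_n\\ y_n\end{bmatrix}\right\}$ in $\mathbf{S}_{\mathcal{H}\oplus\mathcal{H}}$ realizing the two limit conditions. Since $\mathcal{H}$ is finite dimensional, $\mathbf{S}_{\mathcal{H}\oplus\mathcal{H}}$ is compact, so I would pass to a convergent subsequence $\begin{bmatrix} x_{n_k}\\ y_{n_k}\end{bmatrix}\to \begin{bmatrix} x\\ y\end{bmatrix}\in\mathbf{S}_{\mathcal{H}\oplus\mathcal{H}}$. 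The two functionals
\[
\begin{bmatrix} u\\ v\end{bmatrix}\mapsto \left|\left\langle Av, \sqrt{\frac{8-4\rho}{\rho}}\,u+\frac{2-2\rho}{\rho}\,v\right\rangle\right|
\]
and
\[
\begin{bmatrix} u\\ v\end{bmatrix}\mapsto {\rm Re}\left(e^{i\theta}\left\langle \sqrt{\rho(2-\rho)}\,u + (1-\rho)v,\, Av\right\rangle\left\langle Bv,\, \sqrt{\rho(2-\rho)}\,u + (1-\rho)v\right\rangle\right)
\]
are continuous on $\mathcal{H}\oplus\mathcal{H}$, being assembled from inner products, scalar multiplications, products, and the modulus and real-part maps. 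Passing to the limit along the subsequence therefore converts the two limit statements of Theorem~\ref{T.002}(ii) into the exact equality and inequality asserted in Corollary~\ref{C.002.1}(ii) for the limit vector $\begin{bmatrix} x\\ y\end{bmatrix}$.

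I do not anticipate a genuine obstacle here; the only point requiring care is that the sequence furnished by Theorem~\ref{T.002} depends on $\theta$, so the compactness extraction and the resulting extremal vector $\begin{bmatrix} x\\ y\end{bmatrix}$ must be produced separately for each $\theta$. This is harmless, since condition (ii) is itself quantified over $\theta\in[0,2\pi)$. I would also remark that finite dimensionality enters only through the compactness of the closed unit ball (equivalently, through the attainment of the relevant suprema), which is precisely the feature that distinguishes the Corollary from the infinite-dimensional Theorem~\ref{T.002}.
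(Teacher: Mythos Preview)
Your proposal is correct and matches the paper's own argument essentially line for line: the paper also invokes Theorem~\ref{T.002}, extracts a convergent subsequence of the resulting bounded sequence in $\mathbf{S}_{\mathcal{H}\oplus\mathcal{H}}$ (this is where finite dimensionality enters), and declares the limit vector to be the required one, while calling the converse direction straightforward. Your write-up is in fact slightly more explicit than the paper's, spelling out the continuity of the two functionals and the $\theta$-dependence of the extraction.
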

\begin{proof}
Let $\rho\in (0, 2]$. If $A\perp_{w_{\rho}}B$, then from Theorem \ref{T.002} we obtain a sequence $\left\{\begin{bmatrix}
x_n\\
y_n
\end{bmatrix}\right\}$ in $\mathbf{S}_{\mathcal{H}\oplus \mathcal{H}}$ such that
\begingroup\makeatletter\def\f@size{11}\check@mathfonts
\begin{align*}
\displaystyle{\lim_{n\rightarrow \infty}}\left|\left\langle Ay_n, \sqrt{\frac{8-4\rho}{\rho}}x_n+\frac{2-2\rho}{\rho}y_n\right\rangle\right|=w_{\rho}(A)
\end{align*}
\endgroup
and
\begingroup\makeatletter\def\f@size{10}\check@mathfonts
\begin{align*}
\displaystyle{\lim_{n\rightarrow \infty}}{\rm Re}
\left(e^{i\theta}\left\langle \sqrt{\rho(2-\rho)}x_n + (1-\rho)y_n, Ay_n\right\rangle
\left\langle By_n, \sqrt{\rho(2-\rho)}x_n + (1-\rho)y_n\right\rangle\right)\geq0.
\end{align*}
\endgroup
Since $\left\{\begin{bmatrix}
x_n\\
y_n
\end{bmatrix}\right\}$ is a bounded sequence, then it has a convergent subsequence
converging to a vector $\begin{bmatrix}
x\\
y
\end{bmatrix}$. This $\begin{bmatrix}
x\\
y
\end{bmatrix}$ is the required vector.
The proof in the other direction is straightforward.
\end{proof}
As an immediate consequence of Theorem \ref{T.003}, we have the following result that can also be found in \cite[Theorem~2.3]{Mal.Paul.Sen.MM.2022}.
\begin{corollary}\label{C.002.2}
Let $A, B \in \mathbb{B}(\mathcal{H})$. The following conditions are equivalent:
\begin{itemize}
\item[(i)] $A\perp_{w}B$,
\item[(ii)] for each $\theta\in [0, 2\pi)$, there exists a sequence $\{y_n\}$ in $\mathbf{S}_{\mathcal{H}}$ such that
$\displaystyle{\lim_{n\rightarrow \infty}}\left|\left\langle Ay_n, y_n\right\rangle\right|=w(A)$
and
$\displaystyle{\lim_{n\rightarrow \infty}}{\rm Re}
\left(e^{i\theta}\left\langle y_n, Ay_n\right\rangle
\left\langle By_n, y_n\right\rangle\right)\geq0$.
\end{itemize}
\end{corollary}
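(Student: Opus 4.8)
The plan is to obtain this corollary as the special case $\rho = 2$ of Theorem~\ref{T.002}, using the identity $w_2(\cdot) = w(\cdot)$ recorded in the introduction, so that $A\perp_{w_2}B$ means exactly $A\perp_w B$. No new ideas are needed beyond a careful substitution together with a reconciliation of the index sets.

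First I would substitute $\rho = 2$ into the two scalar coefficient-vectors appearing in Theorem~\ref{T.002}. Since $\sqrt{\rho(2-\rho)} = 0$ and $1-\rho = -1$ at $\rho = 2$, and likewise $\sqrt{\frac{8-4\rho}{\rho}} = 0$ and $\frac{2-2\rho}{\rho} = -1$, every occurrence of $\sqrt{\rho(2-\rho)}x_n + (1-\rho)y_n$ collapses to $-y_n$, and the first coordinate $x_n$ disappears from all expressions. Consequently the first condition of Theorem~\ref{T.002}(ii) becomes $\lim_n \bigl|\langle Ay_n, -y_n\rangle\bigr| = \lim_n \bigl|\langle Ay_n, y_n\rangle\bigr| = w(A)$, the factor $-1$ being absorbed by the modulus.

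Next I would simplify the second condition. After the substitution, the argument of $\mathrm{Re}(\cdots)$ reads $e^{i\theta}\langle -y_n, Ay_n\rangle\,\langle By_n, -y_n\rangle$; the two factors of $-1$ multiply to $+1$, so this equals $e^{i\theta}\langle y_n, Ay_n\rangle\,\langle By_n, y_n\rangle$, yielding exactly the inequality $\lim_n \mathrm{Re}\bigl(e^{i\theta}\langle y_n, Ay_n\rangle\langle By_n, y_n\rangle\bigr)\geq 0$ of the corollary.

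Finally I would reconcile the underlying sets. The sequence in Theorem~\ref{T.002} lies in $\mathbf{S}_{\mathcal{H}\oplus\mathcal{H}}$, i.e. $\|x_n\|^2 + \|y_n\|^2 \leq 1$; since the conditions at $\rho = 2$ no longer involve $x_n$, the bound $\|y_n\|\leq 1$ shows $\{y_n\}\subseteq\mathbf{S}_{\mathcal{H}}$, giving the implication (i)$\Rightarrow$(ii). For the converse, given $\{y_n\}\subseteq\mathbf{S}_{\mathcal{H}}$ I would take $x_n = 0$, so that $\bigl[\begin{smallmatrix}0\\ y_n\end{smallmatrix}\bigr]\in\mathbf{S}_{\mathcal{H}\oplus\mathcal{H}}$ satisfies the hypotheses of Theorem~\ref{T.002}(ii), whence $A\perp_{w_2}B$, that is, $A\perp_w B$. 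There is no genuine obstacle here; the only point requiring a moment's care is tracking the two independent sign cancellations, one inside the modulus and one arising from the product of the two conjugate-linear slots, so that no spurious factor of $-1$ survives.
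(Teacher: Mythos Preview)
Your proposal is correct and takes essentially the same approach as the paper: the paper's proof is the single sentence ``The proof follows immediately from Theorem~\ref{T.003} for $\rho=2$'' (where the reference to Theorem~\ref{T.003} is evidently a typo for Theorem~\ref{T.002}), and you have simply spelled out that substitution in detail, including the sign cancellations and the reconciliation of $\mathbf{S}_{\mathcal{H}\oplus\mathcal{H}}$ with $\mathbf{S}_{\mathcal{H}}$.
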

\begin{proof}
The proof follows immediately from Theorem \ref{T.003} for $\rho=2$.
\end{proof}
As another consequence of Theorem \ref{T.003} we have the following result.
\begin{corollary}\label{C.002.3}
Let $A, B \in \mathbb{B}(\mathcal{H})$. The following conditions are equivalent:
\begin{itemize}
\item[(i)] $A\perp B$,
\item[(ii)] for each $\theta\in [0, 2\pi)$, there exists a sequence $\left\{\begin{bmatrix}
x_n\\
y_n
\end{bmatrix}\right\}$ in $\mathbf{S}_{\mathcal{H}\oplus \mathcal{H}}$ such that
$\displaystyle{\lim_{n\rightarrow \infty}}\left|\left\langle Ay_n, x_n\right\rangle\right|=\frac{1}{2}\|A\|$
and
$\displaystyle{\lim_{n\rightarrow \infty}}{\rm Re}
\left(e^{i\theta}\left\langle x_n, Ay_n\right\rangle
\left\langle By_n, x_n\right\rangle\right)\geq0$,
\item[(iii)] there exists a sequence $\{z_n\}$ in $\mathbf{S}_{\mathcal{H}}$ such that
$\displaystyle{\lim_{n\rightarrow \infty}}\left\|Az_n\right\|=\|A\|$
and
$\displaystyle{\lim_{n\rightarrow \infty}}\left\langle Az_n, Bz_n\right\rangle=0$.
\end{itemize}
\end{corollary}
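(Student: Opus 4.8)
The plan is to prove the cycle $(\mathrm i)\Rightarrow(\mathrm{ii})$, $(\mathrm i)\Rightarrow(\mathrm{iii})$ and $(\mathrm{iii})\Rightarrow(\mathrm i)$, so that the three conditions become equivalent. The equivalence $(\mathrm i)\Leftrightarrow(\mathrm{ii})$ costs nothing: it is Theorem~\ref{T.002} specialized to $\rho=1$. Indeed, for $\rho=1$ one has $\sqrt{\rho(2-\rho)}=1$, $1-\rho=0$, $\sqrt{(8-4\rho)/\rho}=2$, $(2-2\rho)/\rho=0$ and $w_{1}(\cdot)=\|\cdot\|$, so the first limit in Theorem~\ref{T.002}(ii) becomes $\lim_n|\langle Ay_n,2x_n\rangle|=\|A\|$, i.e. $\lim_n|\langle Ay_n,x_n\rangle|=\tfrac12\|A\|$, while the second collapses to the inequality written in $(\mathrm{ii})$ above. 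Hence all the substance lies in relating $(\mathrm{iii})$ to the orthogonality.

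The direction $(\mathrm{iii})\Rightarrow(\mathrm i)$ is immediate. If $\{z_n\}\subseteq\mathbf{S}_{\mathcal{H}}$ consists of unit vectors with $\|Az_n\|\to\|A\|$ and $\langle Az_n,Bz_n\rangle\to0$, then for every $\gamma\in\mathbb{C}$
\[
\|(A+\gamma B)z_n\|^{2}=\|Az_n\|^{2}+2\,\mathrm{Re}\big(\overline{\gamma}\,\langle Az_n,Bz_n\rangle\big)+|\gamma|^{2}\|Bz_n\|^{2}\ \ge\ \|Az_n\|^{2}+2\,\mathrm{Re}\big(\overline{\gamma}\,\langle Az_n,Bz_n\rangle\big),
\]
and letting $n\to\infty$ gives $\|A+\gamma B\|^{2}\ge\|A\|^{2}$, that is $w_1(A+\gamma B)\ge w_1(A)$ for all $\gamma$, i.e. $A\perp B$.

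For $(\mathrm i)\Rightarrow(\mathrm{iii})$ I first extract, for each direction, a norm-attaining sequence with a one-sided positivity. Fix $\theta$ and, for each $n$, use $A\perp B$ to pick a unit vector $w_n$ with $\|(A+\tfrac1n e^{i\theta}B)w_n\|^{2}\ge\|A+\tfrac1n e^{i\theta}B\|^{2}-\tfrac1{n^{2}}\ge\|A\|^{2}-\tfrac1{n^{2}}$. Expanding the left side and using $\|Aw_n\|\le\|A\|$ and $\|Bw_n\|\le\|B\|$ yields, on the one hand, $\|Aw_n\|\to\|A\|$ and, on the other, $\mathrm{Re}\big(e^{i\theta}\langle Bw_n,Aw_n\rangle\big)\ge-\tfrac1{2n}(1+\|B\|^{2})$, so $\liminf_n\mathrm{Re}\big(e^{i\theta}\langle Bw_n,Aw_n\rangle\big)\ge0$. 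Let now $K\subseteq\mathbb{C}$ be the set of all subsequential limits of $\langle Az_n,Bz_n\rangle$ taken over unit sequences $\{z_n\}$ with $\|Az_n\|\to\|A\|$; it is nonempty and bounded by $\|A\|\|B\|$. Since $\langle Bw_n,Aw_n\rangle=\overline{\langle Aw_n,Bw_n\rangle}$, passing to a subsequence along which $\langle Aw_n,Bw_n\rangle$ converges (its limit lying in $K$) shows that for every $\theta$ one has $\sup_{c\in K}\mathrm{Re}(e^{i\theta}c)\ge0$; were $0\notin\overline{\mathrm{conv}(K)}$, strict separation of $0$ from this compact convex set would produce a $\theta$ violating this. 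Hence $0\in\overline{\mathrm{conv}(K)}$.

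It remains to promote this to $0\in K$, which is exactly assertion $(\mathrm{iii})$, and here lies the main obstacle: one must show that $K$ is convex. Writing $\langle Az,Bz\rangle=\langle B^{*}Az,z\rangle$ and letting $P_\delta$ be the spectral projection of the positive operator $A^{*}A$ for the interval $[\|A\|^{2}-\delta,\|A\|^{2}]$ (nonzero because $\|A\|^{2}=\max\sigma(A^{*}A)\in\sigma(A^{*}A)$), any norm-attaining sequence concentrates in $\mathrm{Ran}\,P_\delta$: from $\langle A^{*}Az_n,z_n\rangle\to\|A\|^{2}$ and $A^{*}A\le\|A\|^{2}I$ one gets $\|(I-P_\delta)z_n\|^{2}\le\delta^{-1}\big(\|A\|^{2}-\langle A^{*}Az_n,z_n\rangle\big)\to0$ for each fixed $\delta$. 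Consequently
\[
K=\bigcap_{\delta>0}\overline{W\big(P_\delta B^{*}A P_\delta\big|_{\mathrm{Ran}\,P_\delta}\big)},
\]
a nested (as $\delta\downarrow0$) intersection of closures of numerical ranges. Each such set is convex by the Toeplitz--Hausdorff theorem, so $K$ is compact and convex; therefore $\overline{\mathrm{conv}(K)}=K$ and $0\in K$, which furnishes the sequence required in $(\mathrm{iii})$. The delicate step is precisely this convexity: in finite dimensions the norm-attaining vectors form the unit sphere of the top eigenspace of $A^{*}A$ and $K$ is literally the numerical range of a compression, but when $\|A\|$ is not attained one must pass to the spectral subspaces $\mathrm{Ran}\,P_\delta$ to recover the same convexity, and verifying the displayed identity for $K$ is where the real care is needed.
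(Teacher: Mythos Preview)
Your argument is correct and takes a genuinely different route from the paper for the substantive implication linking (i) and (iii). The paper proves the chain $(\mathrm{ii})\Rightarrow(\mathrm{iii})$: it applies (ii) with $\theta=0$ and $\theta=\pi$ to obtain two sequences in $\mathbf{S}_{\mathcal{H}\oplus\mathcal{H}}$ whose real parts have opposite signs in the limit, asserts that one can therefore pass to a single sequence $\{(x_n'',y_n'')\}$ with $\langle By_n'',x_n''\rangle=0$, and then sets $z_n=2\|x_n''\|y_n''$, $e_n=x_n''/\|x_n''\|$ and checks that $Az_n/\|Az_n\|-e_n\to 0$, whence $\langle Az_n,Bz_n\rangle\to 0$. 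You instead bypass (ii) for this direction and prove $(\mathrm i)\Rightarrow(\mathrm{iii})$ by identifying the set $K$ of all subsequential limits of $\langle Az_n,Bz_n\rangle$ along norm-attaining unit sequences with the nested intersection $\bigcap_{\delta>0}\overline{W\big(P_\delta B^{*}AP_\delta\big|_{\mathrm{Ran}\,P_\delta}\big)}$; each factor is convex by Toeplitz--Hausdorff, so $K$ is compact convex, and the one-sided inequalities for every $\theta$ force $0\in\overline{\mathrm{conv}(K)}=K$. Your approach is more structural and entirely self-contained---in particular it does not rely on the interpolation step between the $\theta=0$ and $\theta=\pi$ sequences that the paper invokes without justification---and it explains conceptually \emph{why} the Bhatia--\v{S}emrl limit set is convex. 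The paper's route, by contrast, avoids spectral calculus altogether and stays at the level of explicit sequences; its rescaling $z_n=2\|x_n\|y_n$ also makes the passage from the $\mathcal{H}\oplus\mathcal{H}$ description in (ii) to the $\mathcal{H}$ description in (iii) completely concrete, a bridge your argument never needs to build.
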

\begin{proof}
(i)$\Rightarrow$(ii) This implication follows immediately from Theorem \ref{T.003} for $\rho=1$.

(ii)$\Rightarrow$(iii) Suppose (ii) holds. For $\mu =0, \pi$, there exist, respectively, sequences $\left\{\begin{bmatrix}
x_n\\
y_n
\end{bmatrix}\right\}$ and $\left\{\begin{bmatrix}
x'_n\\
y'_n
\end{bmatrix}\right\}$ in $\mathbf{S}_{\mathcal{H}\oplus \mathcal{H}}$ such that
\begin{align*}
\displaystyle{\lim_{n\rightarrow \infty}}\left|\left\langle Ay_n, x_n\right\rangle\right|=\displaystyle{\lim_{n\rightarrow \infty}}\left|\left\langle Ay'_n, x'_n\right\rangle\right|=\frac{1}{2}\|A\|
\end{align*}
and
\begin{align*}
\displaystyle{\lim_{n\rightarrow \infty}}{\rm Re}\left(
\left\langle x'_n, Ay'_n\right\rangle
\left\langle By'_n, x'_n\right\rangle\right)\leq 0 \leq \displaystyle{\lim_{n\rightarrow \infty}}{\rm Re}\left(
\left\langle x_n, Ay_n\right\rangle
\left\langle By_n, x_n\right\rangle\right).
\end{align*}
Therefore, we can find sequence $\left\{\begin{bmatrix}
x''_n\\
y''_n
\end{bmatrix}\right\}$ in $\mathbf{S}_{\mathcal{H}\oplus \mathcal{H}}$ such that
\begin{align}\label{I.1.C.002.3}
\displaystyle{\lim_{n\rightarrow \infty}}\left|\left\langle Ay''_n, x''_n\right\rangle\right|=\frac{1}{2}\|A\| \quad \mbox{and} \quad
\left\langle By''_n, x''_n\right\rangle=0.
\end{align}
Put $z_n:=2\|x''_n\|y''_n$ and $e_n:=\frac{x''_n}{\|x''_n\|}$. Hence, $\|e_n\|=1$ and
by the arithmetic-geometric mean inequality, we have $\|z_n\|=2\|x''_n\|\|y''_n\|\leq \|x''_n\|^2+\|y''_n\|^2\leq1$, and so $z_n\in\mathbf{S}_{\mathcal{H}}$.
Rewrite \eqref{I.1.C.002.3} as
\begin{align}\label{I.2.C.002.3}
\displaystyle{\lim_{n\rightarrow \infty}}\left|\left\langle Az_n, e_n\right\rangle\right|=\|A\| \quad \mbox{and} \quad
\left\langle e_n, Bz_n\right\rangle=0.
\end{align}
Since $\left|\left\langle Az_n, e_n\right\rangle\right|\leq\|Az_n\|\leq \|A\|$ and ${\left\|\frac{Az_n}{\|Az_n\|}-e_n\right\|}^2=2-\frac{2}{\|Az_n\|}
{\rm Re}\left(\left\langle Az_n, e_n\right\rangle\right)$, by \eqref{I.2.C.002.3} it follows that $\displaystyle{\lim_{n\rightarrow \infty}}\left\|Az_n\right\|=\|A\|$
and $\displaystyle{\lim_{n\rightarrow \infty}}\frac{Az_n}{\|Az_n\|}-e_n=0$.
Therefore, the above facts imply that
\begin{align*}
\displaystyle{\lim_{n\rightarrow \infty}}\left\langle Az_n, Bz_n\right\rangle
= \displaystyle{\lim_{n\rightarrow \infty}}\left(\|Az_n\|\left\langle \frac{Az_n}{\|Az_n\|}-e_n, Bz_n\right\rangle
+\|Az_n\|\left\langle e_n, Bz_n\right\rangle\right)=0.
\end{align*}
(iii)$\Rightarrow$(i) Suppose that there exists a sequence $\{z_n\}$ in $\mathbf{S}_{\mathcal{H}}$ such that
$\displaystyle{\lim_{n\rightarrow \infty}}\left\|Az_n\right\|=\|A\|$
and
$\displaystyle{\lim_{n\rightarrow \infty}}\left\langle Az_n, Bz_n\right\rangle=0$.
We have
\begin{align*}
{\|A + \gamma B\|}^2 \geq {\|(A + \gamma B)z_n\|}^2
= {\|Az_n\|}^2 + 2{\rm Re}(\overline{\gamma}{\langle Az_n, Bz_n\rangle}) + |\gamma|^2{\|Bz_n\|}^2
\end{align*}
for all $\gamma\in\mathbb{C}$ and $n\in\mathbb{N}$. Thus,
\begin{align*}
{\|A + \gamma B\|}^2 \geq \lim_{n\rightarrow\infty}\sup{\|(A + \gamma B)z_n\|}^2 \geq {\|A\|}^2
\end{align*}
for all $\gamma \in \mathbb{C}$, and so $A\perp B$.
\end{proof}
\begin{remark}\label{R.002.4}
The equivalence (i)$\Leftrightarrow$(iii) in Corollary \ref{C.002.3} is due to R.~Bhatia and P.~\v{S}emrl \cite[Remark~3.1]{B.S.LAA.1999}.
\end{remark}
\section{The $w_{\rho}$-parallelism}\label{section.3}
In this section, we explore the $w_{\rho}$-parallelism for Hilbert space operators.
Here are some properties of the $w_{\rho}$-parallelism whose proofs are so easy that we omit them.
\begin{proposition}\label{P.003}
Let $A, B \in \mathbb{B}(\mathcal{H})$ and $\rho\in (0, 2]$. The following conditions are mutually equivalent:
\begin{itemize}
\item[(i)] $A\parallel_{w_{\rho}}B$,
\item[(ii)] $A^*\parallel_{w_{\rho}}B^*$,
\item[(iii)] $\zeta A\parallel_{w_{\rho}}\zeta B$ for all $\zeta\in \mathbb{C}\setminus\{0\}$,
\item[(iii)] $\alpha A\parallel_{w_{\rho}}\beta B$ for all $\alpha, \beta\in \mathbb{R}\setminus\{0\}$,
\item[(iv)] $U^*AU\parallel_{w_{\rho}}U^*BU$ for all unitary $U\in\mathbb{B}(\mathcal{H})$.
\end{itemize}
\end{proposition}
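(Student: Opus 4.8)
The plan is to reduce each of the stated conditions to (i) by using only the three structural features of $w_{\rho}(\cdot)$ recorded in the introduction for $\rho\in(0,2]$: it is a genuine norm on $\mathbb{B}(\mathcal{H})$ (so in particular $w_{\rho}(cX)=|c|\,w_{\rho}(X)$ for every $c\in\mathbb{C}$), it is self-adjoint, $w_{\rho}(X^*)=w_{\rho}(X)$, and it is weakly unitarily invariant, $w_{\rho}(U^*XU)=w_{\rho}(X)$ for unitary $U$. Throughout I will read $A\parallel_{w_{\rho}}B$ as the statement that $A$ and $\lambda B$ realize the equality case of the triangle inequality, $w_{\rho}(A+\lambda B)=w_{\rho}(A)+w_{\rho}(\lambda B)$, for some $\lambda\in\mathbb{T}$.

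The equivalences (i)$\Leftrightarrow$(ii) and (i)$\Leftrightarrow$(iv) are immediate. If $w_{\rho}(A+\lambda B)=w_{\rho}(A)+w_{\rho}(B)$ with $\lambda\in\mathbb{T}$, then passing to adjoints gives $w_{\rho}(A^*+\bar\lambda B^*)=w_{\rho}\big((A+\lambda B)^*\big)=w_{\rho}(A+\lambda B)=w_{\rho}(A^*)+w_{\rho}(B^*)$, and $\bar\lambda\in\mathbb{T}$, so $A^*\parallel_{w_{\rho}}B^*$; the reverse follows by symmetry since $A^{**}=A$. Likewise, conjugating by a unitary $U$ and invoking $w_{\rho}(U^*XU)=w_{\rho}(X)$ converts the defining equality for $A,B$ into the one for $U^*AU,U^*BU$ with the same $\lambda$, while $U=I$ recovers (i) from (iv). For (i)$\Leftrightarrow$(iii), given nonzero $\zeta\in\mathbb{C}$ I factor $\zeta A+\lambda(\zeta B)=\zeta(A+\lambda B)$ and apply absolute homogeneity to get $w_{\rho}(\zeta A+\lambda\zeta B)=|\zeta|\,w_{\rho}(A+\lambda B)=w_{\rho}(\zeta A)+w_{\rho}(\zeta B)$, so the same $\lambda$ witnesses $\zeta A\parallel_{w_{\rho}}\zeta B$; taking $\zeta=1$ gives the converse.

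The only implication carrying genuine content is (i)$\Leftrightarrow$(iii$'$), where the two operators are rescaled by possibly \emph{different} nonzero reals $\alpha,\beta$; taking $\alpha=\beta=1$ yields (iii$'$)$\Rightarrow$(i). For the forward direction I would first isolate the scaling lemma: if $w_{\rho}(A+\lambda B)=w_{\rho}(A)+w_{\rho}(B)$, then $w_{\rho}(sA+t\lambda B)=s\,w_{\rho}(A)+t\,w_{\rho}(B)$ for all $s,t\ge 0$. This is the step where the real work lies, and I expect it to be the main obstacle, since independent positive scaling of a triangle-equality pair is not a formal consequence of the norm axioms alone. I would prove it by a supporting-functional argument: by Hahn--Banach choose a functional $\phi$ of dual $w_{\rho}$-norm one with $\phi(A+\lambda B)=w_{\rho}(A+\lambda B)=w_{\rho}(A)+w_{\rho}(\lambda B)$; comparing $\mathrm{Re}\,\phi(A)+\mathrm{Re}\,\phi(\lambda B)=w_{\rho}(A)+w_{\rho}(\lambda B)$ with the individual bounds $\mathrm{Re}\,\phi(A)\le w_{\rho}(A)$ and $\mathrm{Re}\,\phi(\lambda B)\le w_{\rho}(\lambda B)$ forces $\phi(A)=w_{\rho}(A)$ and $\phi(\lambda B)=w_{\rho}(\lambda B)$, whence $s\,w_{\rho}(A)+t\,w_{\rho}(B)=\mathrm{Re}\,\phi(sA+t\lambda B)\le w_{\rho}(sA+t\lambda B)$, the reverse being the triangle inequality.

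Granting the lemma, for $\alpha,\beta\in\mathbb{R}\setminus\{0\}$ I set $\mu=\operatorname{sgn}(\alpha\beta)\lambda\in\mathbb{T}$, so that $\alpha A+\mu\beta B=\operatorname{sgn}(\alpha)\big(|\alpha|A+|\beta|\lambda B\big)$; applying $w_{\rho}$, its absolute homogeneity, and the scaling lemma with $s=|\alpha|,\ t=|\beta|$ gives $w_{\rho}(\alpha A+\mu\beta B)=|\alpha|\,w_{\rho}(A)+|\beta|\,w_{\rho}(B)=w_{\rho}(\alpha A)+w_{\rho}(\beta B)$, so $\alpha A\parallel_{w_{\rho}}\beta B$. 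Apart from the scaling lemma, every step is routine bookkeeping with the norm, adjoint, and unitary-invariance properties of $w_{\rho}(\cdot)$.
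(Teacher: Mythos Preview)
Your argument is correct. The paper itself omits the proof entirely, stating that these properties ``are so easy that we omit them,'' so there is nothing to compare against beyond noting that you have supplied what the authors left to the reader.

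One small remark: your supporting-functional (Hahn--Banach) argument for the scaling lemma is valid, but it is heavier than necessary. The same conclusion follows from two applications of the triangle inequality for $w_{\rho}(\cdot)$: assuming $s\ge t\ge 0$ and $w_{\rho}(A+\lambda B)=w_{\rho}(A)+w_{\rho}(B)$, write $s(A+\lambda B)=(sA+t\lambda B)+(s-t)\lambda B$ to get
\[
s\,w_{\rho}(A)+s\,w_{\rho}(B)\le w_{\rho}(sA+t\lambda B)+(s-t)\,w_{\rho}(B),
\]
hence $w_{\rho}(sA+t\lambda B)\ge s\,w_{\rho}(A)+t\,w_{\rho}(B)$, and the reverse inequality is the ordinary triangle inequality. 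This elementary route is presumably what the authors had in mind when declaring the proof easy; your approach is of course also fine.
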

To achieve the following theorem, we mimic some ideas of \cite[Theorem~2.1]{B.B.PAMS.2002}.
\begin{theorem}\label{T.003}
Let $A, B \in \mathbb{B}(\mathcal{H})$ and $\rho\in (0, 2]$. The following conditions are equivalent:
\begin{itemize}
\item[(i)] $A\parallel_{w_{\rho}}B$,
\item[(ii)] there exists a sequence $\left\{\begin{bmatrix}
x_n\\
y_n
\end{bmatrix}\right\}$ in $\mathbf{S}_{\mathcal{H}\oplus \mathcal{H}}$ such that
\begingroup\makeatletter\def\f@size{10}\check@mathfonts
\begin{align*}
\displaystyle{\lim_{n\rightarrow \infty}}\left|\left\langle Ay_n, \sqrt{\frac{8-4\rho}{\rho}}x_n+\frac{2-2\rho}{\rho}y_n\right\rangle\left\langle By_n, \sqrt{\frac{8-4\rho}{\rho}}x_n+\frac{2-2\rho}{\rho}y_n\right\rangle\right|=w_{\rho}(A)w_{\rho}(B).
\end{align*}
\endgroup
\end{itemize}
\end{theorem}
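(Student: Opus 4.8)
The plan is to reduce the $w_\rho$-parallelism of $A$ and $B$ to ordinary numerical-radius parallelism of a pair of $2\times 2$ operator matrices, using Lemma~\ref{L.001}. Set
\[
T_X := \begin{bmatrix} 0 & \sqrt{\rho(2-\rho)}\,X \\ 0 & (1-\rho)X \end{bmatrix}, \qquad X \in \mathbb{B}(\mathcal{H}),
\]
and observe that $X \mapsto T_X$ is complex-linear, so $T_{A+\lambda B} = T_A + \lambda T_B$. By Lemma~\ref{L.001} we have $w_{\rho}(X) = \frac{2}{\rho}\,w(T_X)$, and for $u = \begin{bmatrix} x \\ y \end{bmatrix}$ a direct computation using $\sqrt{\tfrac{8-4\rho}{\rho}} = \tfrac{2}{\rho}\sqrt{\rho(2-\rho)}$ and $\tfrac{2-2\rho}{\rho} = \tfrac{2}{\rho}(1-\rho)$ gives
\[
\left\langle Xy, \sqrt{\tfrac{8-4\rho}{\rho}}\,x + \tfrac{2-2\rho}{\rho}\,y\right\rangle = \tfrac{2}{\rho}\,\langle T_X u, u\rangle .
\]
Consequently $A\parallel_{w_\rho} B$ is equivalent to $\max_{\lambda\in\mathbb{T}} w(T_A + \lambda T_B) = w(T_A) + w(T_B)$, i.e.\ to the numerical-radius parallelism $T_A \parallel_w T_B$, while the quantity inside the limit in (ii) equals $\frac{4}{\rho^2}\big|\langle T_A u_n,u_n\rangle\langle T_B u_n, u_n\rangle\big|$ and $w_{\rho}(A)w_{\rho}(B) = \frac{4}{\rho^2}\,w(T_A)w(T_B)$. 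Thus the theorem reduces to proving that $T_A\parallel_w T_B$ holds if and only if there is a sequence of unit vectors $\{u_n\}$ with $\lim_n |\langle T_A u_n,u_n\rangle\,\langle T_B u_n,u_n\rangle| = w(T_A)w(T_B)$. Since $w_\rho$ is a norm on $\mathbb{B}(\mathcal{H})$, if $A=0$ or $B=0$ both (i) and (ii) hold trivially, so I may assume $w(T_A), w(T_B) > 0$.

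For (i)$\Rightarrow$(ii), pick $\lambda_0\in\mathbb{T}$ with $w(T_A+\lambda_0 T_B) = w(T_A)+w(T_B)$ and choose unit vectors $u_n$ with $|\langle(T_A+\lambda_0 T_B)u_n,u_n\rangle|\to w(T_A+\lambda_0 T_B)$. Writing $\alpha_n = \langle T_A u_n,u_n\rangle$ and $\beta_n = \langle T_B u_n,u_n\rangle$, the bounds $|\alpha_n|\le w(T_A)$ and $|\beta_n|\le w(T_B)$ combine with $|\alpha_n + \lambda_0\beta_n|\le |\alpha_n| + |\beta_n| \le w(T_A)+w(T_B)$ and $|\alpha_n+\lambda_0\beta_n|\to w(T_A)+w(T_B)$ to force, by a squeeze, $|\alpha_n|\to w(T_A)$ and $|\beta_n|\to w(T_B)$. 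Hence $|\alpha_n\beta_n|\to w(T_A)w(T_B)$, which is exactly (ii).

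For (ii)$\Rightarrow$(i), the same squeeze, now using $w(T_A),w(T_B)>0$, applied to $|\alpha_n||\beta_n|\to w(T_A)w(T_B)$ yields $|\alpha_n|\to w(T_A)$ and $|\beta_n|\to w(T_B)$; in particular $\alpha_n,\beta_n\neq 0$ for large $n$. The remaining task is to align the phases with a single unimodular constant. The naturally optimal rotation is $e^{i(\arg\alpha_n - \arg\beta_n)}$, which depends on $n$, so I pass to a subsequence (by compactness of $\mathbb{T}$) along which $\arg\alpha_n - \arg\beta_n$ converges and set $\lambda_0$ equal to the limiting phase. Along this subsequence $\alpha_n$ and $\lambda_0\beta_n$ become asymptotically aligned, so $|\alpha_n + \lambda_0\beta_n|\to w(T_A)+w(T_B)$ and therefore $w(T_A+\lambda_0 T_B)\ge w(T_A)+w(T_B)$; the reverse inequality is the triangle inequality for the numerical radius, giving $T_A\parallel_w T_B$, hence $A\parallel_{w_\rho}B$.

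The main obstacle I expect is precisely this phase-alignment step in (ii)$\Rightarrow$(i): the optimal rotation varies with $n$ and need not converge, so one must extract a subsequence and then verify that the degenerate possibilities ($w_\rho(A)=0$ or $w_\rho(B)=0$, and vanishing $\alpha_n$ or $\beta_n$) have been correctly excluded before defining $\lambda_0$. Everything else is routine bookkeeping transporting the statement between $w_\rho(\cdot)$ and the numerical radius through the matrices $T_X$.
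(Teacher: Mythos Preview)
Your proposal is correct and follows essentially the same approach as the paper: both use Lemma~\ref{L.001} to pass to the block operators $T_A,T_B$, a squeeze argument for (i)$\Rightarrow$(ii), and compactness of $\mathbb{T}$ to extract a limiting phase for (ii)$\Rightarrow$(i). Your write-up is in fact tidier, since you make the reduction to the numerical-radius parallelism $T_A\parallel_w T_B$ explicit (which is exactly Corollary~\ref{C.003.2} applied on $\mathcal{H}\oplus\mathcal{H}$), whereas the paper carries out the same computations directly inside the block matrices without naming that reduction.
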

\begin{proof}
(i)$\Rightarrow$(ii) Let $A\parallel_{w_{\rho}}B$. Hence, there exists $\lambda\in\mathbb{T}$ such that
$w_{\rho}(A+\lambda B) = w_{\rho}(A)+w_{\rho}(B)$.
By Lemma \ref{L.001} there exists a sequence $\left\{\begin{bmatrix}
x_n\\
y_n
\end{bmatrix}\right\}$ in $\mathbf{S}_{\mathcal{H}\oplus \mathcal{H}}$ such that
\begingroup\makeatletter\def\f@size{11}\check@mathfonts
\begin{align*}
\displaystyle{\lim_{n\rightarrow \infty}}\left|\left\langle \begin{bmatrix}
0 & \sqrt{\rho(2-\rho)}(A+\lambda B)\\
0 & (1-\rho)(A+\lambda B)
\end{bmatrix}\begin{bmatrix}
x_n\\
y_n
\end{bmatrix}, \begin{bmatrix}
x_n\\
y_n
\end{bmatrix}\right\rangle\right|=\frac{\rho}{2}\,w_{\rho}(A+\lambda B),
\end{align*}
\endgroup
or equivalently,
\begingroup\makeatletter\def\f@size{11}\check@mathfonts
\begin{align}\label{I.1.T.003}
\displaystyle{\lim_{n\rightarrow \infty}}\left|\left\langle (A+\lambda B)y_n, \sqrt{\frac{8-4\rho}{\rho}}x_n+\frac{2-2\rho}{\rho}y_n\right\rangle\right|=w_{\rho}(A+\lambda B).
\end{align}
\endgroup
We have
\begingroup\makeatletter\def\f@size{9}\check@mathfonts
\begin{align*}
&\left|\left\langle (A+\lambda B)y_n, \sqrt{\frac{8-4\rho}{\rho}}x_n+\frac{2-2\rho}{\rho}y_n\right\rangle\right|^2
\\& = \left|\left\langle Ay_n, \sqrt{\frac{8-4\rho}{\rho}}x_n+\frac{2-2\rho}{\rho}y_n\right\rangle\right|^2
+ \left|\left\langle By_n, \sqrt{\frac{8-4\rho}{\rho}}x_n+\frac{2-2\rho}{\rho}y_n\right\rangle\right|^2
\\& \qquad + 2{\rm Re}\left(\overline{\lambda}\left\langle Ay_n, \sqrt{\frac{8-4\rho}{\rho}}x_n+\frac{2-2\rho}{\rho}y_n\right\rangle
\left\langle \sqrt{\frac{8-4\rho}{\rho}}x_n+\frac{2-2\rho}{\rho}y_n, By_n\right\rangle\right)
\\&\leq \frac{4}{\rho^2}\,\left|\left\langle \begin{bmatrix}
0 & \sqrt{\rho(2-\rho)}A\\
0 & (1-\rho)A
\end{bmatrix}\begin{bmatrix}
x_n\\
y_n
\end{bmatrix}, \begin{bmatrix}
x_n\\
y_n
\end{bmatrix}\right\rangle\right|^2
+ \frac{4}{\rho^2}\,\left|\left\langle \begin{bmatrix}
0 & \sqrt{\rho(2-\rho)}B\\
0 & (1-\rho)B
\end{bmatrix}\begin{bmatrix}
x_n\\
y_n
\end{bmatrix}, \begin{bmatrix}
x_n\\
y_n
\end{bmatrix}\right\rangle\right|^2
\\& \qquad + 2\left|\left\langle Ay_n, \sqrt{\frac{8-4\rho}{\rho}}x_n+\frac{2-2\rho}{\rho}y_n\right\rangle
\left\langle \sqrt{\frac{8-4\rho}{\rho}}x_n+\frac{2-2\rho}{\rho}y_n, By_n\right\rangle\right|
\\& \leq w^2_{\rho}(A) + w^2_{\rho}(B)
+ 2\left|\left\langle Ay_n, \sqrt{\frac{8-4\rho}{\rho}}x_n+\frac{2-2\rho}{\rho}y_n\right\rangle
\left\langle By_n, \sqrt{\frac{8-4\rho}{\rho}}x_n+\frac{2-2\rho}{\rho}y_n\right\rangle\right|
\\& \leq w^2_{\rho}(A) + w^2_{\rho}(B)
+ \frac{8}{\rho^2}\,\left|\left\langle \begin{bmatrix}
0 & \sqrt{\rho(2-\rho)}A\\
0 & (1-\rho)A
\end{bmatrix}\begin{bmatrix}
x_n\\
y_n
\end{bmatrix}, \begin{bmatrix}
x_n\\
y_n
\end{bmatrix}\right\rangle\right|
\left|\left\langle \begin{bmatrix}
0 & \sqrt{\rho(2-\rho)}B\\
0 & (1-\rho)B
\end{bmatrix}\begin{bmatrix}
x_n\\
y_n
\end{bmatrix}, \begin{bmatrix}
x_n\\
y_n
\end{bmatrix}\right\rangle\right|
\\& \leq w^2_{\rho}(A) + w^2_{\rho}(B) + 2w_{\rho}(A)w_{\rho}(B),
\end{align*}
\endgroup
and hence
\begingroup\makeatletter\def\f@size{9}\check@mathfonts
\begin{align}\label{I.2.T.003}
&\left|\left\langle (A+\lambda B)y_n, \sqrt{\frac{8-4\rho}{\rho}}x_n+\frac{2-2\rho}{\rho}y_n\right\rangle\right|^2\nonumber
\\&\leq w^2_{\rho}(A) + w^2_{\rho}(B)
+ 2\left|\left\langle Ay_n, \sqrt{\frac{8-4\rho}{\rho}}x_n+\frac{2-2\rho}{\rho}y_n\right\rangle
\left\langle By_n, \sqrt{\frac{8-4\rho}{\rho}}x_n+\frac{2-2\rho}{\rho}y_n\right\rangle\right|\nonumber
\\& \leq w^2_{\rho}(A) + w^2_{\rho}(B) + 2w_{\rho}(A)w_{\rho}(B) = w^2_{\rho}(A+\lambda B).
\end{align}
\endgroup
By \eqref{I.1.T.003} and \eqref{I.2.T.003} we obtain
\begingroup\makeatletter\def\f@size{10}\check@mathfonts
\begin{align*}
\displaystyle{\lim_{n\rightarrow \infty}}\left|\left\langle Ay_n, \sqrt{\frac{8-4\rho}{\rho}}x_n+\frac{2-2\rho}{\rho}y_n\right\rangle\left\langle By_n, \sqrt{\frac{8-4\rho}{\rho}}x_n+\frac{2-2\rho}{\rho}y_n\right\rangle\right|=w_{\rho}(A)w_{\rho}(B).
\end{align*}
\endgroup
(ii)$\Rightarrow$(i) Suppose that there exists a sequence $\left\{\begin{bmatrix}
x_n\\
y_n
\end{bmatrix}\right\}$ in $\mathbf{S}_{\mathcal{H}\oplus \mathcal{H}}$ such that
\begingroup\makeatletter\def\f@size{9}\check@mathfonts
\begin{align}\label{I.3.T.003}
\displaystyle{\lim_{n\rightarrow \infty}}\left|\left\langle Ay_n, \sqrt{\frac{8-4\rho}{\rho}}x_n+\frac{2-2\rho}{\rho}y_n\right\rangle\left\langle By_n, \sqrt{\frac{8-4\rho}{\rho}}x_n+\frac{2-2\rho}{\rho}y_n\right\rangle\right|=w_{\rho}(A)w_{\rho}(B).
\end{align}
\endgroup
Since
\begingroup\makeatletter\def\f@size{9}\check@mathfonts
\begin{align*}
&\left|\left\langle Ay_n, \sqrt{\frac{8-4\rho}{\rho}}x_n+\frac{2-2\rho}{\rho}y_n\right\rangle\left\langle By_n, \sqrt{\frac{8-4\rho}{\rho}}x_n+\frac{2-2\rho}{\rho}y_n\right\rangle\right|
\\& \qquad \leq \left|\left\langle Ay_n, \sqrt{\frac{8-4\rho}{\rho}}x_n+\frac{2-2\rho}{\rho}y_n\right\rangle\right| w_{\rho}(B)
\leq w_{\rho}(A)w_{\rho}(B),
\end{align*}
\endgroup
by \eqref{I.3.T.003} it follows that
\begin{align}\label{I.4.T.003}
\displaystyle{\lim_{n\rightarrow \infty}}\left|\left\langle Ay_n, \sqrt{\frac{8-4\rho}{\rho}}x_n+\frac{2-2\rho}{\rho}y_n\right\rangle\right|=w_{\rho}(A).
\end{align}
By using a similar argument, we also have
\begin{align}\label{I.5.T.003}
\displaystyle{\lim_{n\rightarrow \infty}}\left|\left\langle By_n, \sqrt{\frac{8-4\rho}{\rho}}x_n+\frac{2-2\rho}{\rho}y_n\right\rangle\right|=w_{\rho}(B).
\end{align}
Further, for every $n$, there exist $\lambda_n\in\mathbb{T}$ such that
\begingroup\makeatletter\def\f@size{10}\check@mathfonts
\begin{align*}
&\left|\left\langle Ay_n, \sqrt{\frac{8-4\rho}{\rho}}x_n+\frac{2-2\rho}{\rho}y_n\right\rangle\left\langle By_n, \sqrt{\frac{8-4\rho}{\rho}}x_n+\frac{2-2\rho}{\rho}y_n\right\rangle\right|
\\& \qquad = \lambda_n\left\langle Ay_n, \sqrt{\frac{8-4\rho}{\rho}}x_n+\frac{2-2\rho}{\rho}y_n\right\rangle\left\langle By_n, \sqrt{\frac{8-4\rho}{\rho}}x_n+\frac{2-2\rho}{\rho}y_n\right\rangle.
\end{align*}
\endgroup
From this and \eqref{I.3.T.003} it follows that
\begingroup\makeatletter\def\f@size{9}\check@mathfonts
\begin{align}\label{I.6.T.003}
\displaystyle{\lim_{n\rightarrow \infty}}\lambda_n\left\langle Ay_n, \sqrt{\frac{8-4\rho}{\rho}}x_n+\frac{2-2\rho}{\rho}y_n\right\rangle\left\langle By_n, \sqrt{\frac{8-4\rho}{\rho}}x_n+\frac{2-2\rho}{\rho}y_n\right\rangle=w_{\rho}(A)w_{\rho}(B).
\end{align}
\endgroup
Since $\{\lambda_n\}$ is a bounded sequence in $\mathbb{T}$, there exists a subsequence $\{\lambda_{n_k}\}$ and a
$\lambda \in\mathbb{C}$ such that $\lambda_{n_k} \rightarrow \overline{\lambda}$.
We have
\begin{align*}
\big|1-|\lambda|\big|=\big||\lambda_{n_k}|-|\overline{\lambda}|\big|\leq\big|\lambda_{n_k}-\overline{\lambda}\big| \rightarrow 0,
\end{align*}
and hence $|\lambda|=1$. Thus, $\lambda \in \mathbb{T}$. Since
\begingroup\makeatletter\def\f@size{10}\check@mathfonts
\begin{align*}
&{\rm Re}\left(\overline{\lambda}\left\langle Ay_{n_k}, \sqrt{\frac{8-4\rho}{\rho}}x_{n_k}+\frac{2-2\rho}{\rho}y_{n_k}\right\rangle\left\langle By_{n_k}, \sqrt{\frac{8-4\rho}{\rho}}x_{n_k}+\frac{2-2\rho}{\rho}y_{n_k}\right\rangle\right)
\\&={\rm Re}\left((\overline{\lambda}-\lambda_{n_k})\left\langle Ay_{n_k}, \sqrt{\frac{8-4\rho}{\rho}}x_{n_k}+\frac{2-2\rho}{\rho}y_{n_k}\right\rangle\left\langle By_{n_k}, \sqrt{\frac{8-4\rho}{\rho}}x_{n_k}+\frac{2-2\rho}{\rho}y_{n_k}\right\rangle\right)
\\& \qquad +{\rm Re}\left(\lambda_{n_k}\left\langle Ay_{n_k}, \sqrt{\frac{8-4\rho}{\rho}}x_{n_k}+\frac{2-2\rho}{\rho}y_{n_k}\right\rangle\left\langle By_{n_k}, \sqrt{\frac{8-4\rho}{\rho}}x_{n_k}+\frac{2-2\rho}{\rho}y_{n_k}\right\rangle\right),
\end{align*}
\endgroup
by \eqref{I.6.T.003} we get
\begingroup\makeatletter\def\f@size{9}\check@mathfonts
\begin{align}\label{I.7.T.003}
{\rm Re}\left(\overline{\lambda}\left\langle Ay_{n_k}, \sqrt{\frac{8-4\rho}{\rho}}x_{n_k}+\frac{2-2\rho}{\rho}y_{n_k}\right\rangle\left\langle By_{n_k}, \sqrt{\frac{8-4\rho}{\rho}}x_{n_k}+\frac{2-2\rho}{\rho}y_{n_k}\right\rangle\right)
=w_{\rho}(A)w_{\rho}(B).
\end{align}
\endgroup
For every $k$, by Lemma \ref{L.001} we also have
\begingroup\makeatletter\def\f@size{9}\check@mathfonts
\begin{align*}
w^2_{\rho}(A+\lambda B)&\geq \left|\left\langle (A+\lambda B)y_{n_k}, \sqrt{\frac{8-4\rho}{\rho}}x_{n_k}+\frac{2-2\rho}{\rho}y_{n_k}\right\rangle\right|^2
\\&=\left|\left\langle Ay_{n_k}, \sqrt{\frac{8-4\rho}{\rho}}x_{n_k}+\frac{2-2\rho}{\rho}y_{n_k}\right\rangle\right|^2
+ \left|\left\langle By_{n_k}, \sqrt{\frac{8-4\rho}{\rho}}x_{n_k}+\frac{2-2\rho}{\rho}y_{n_k}\right\rangle\right|^2
\\& \qquad +2{\rm Re}\left(\overline{\lambda}\left\langle Ay_{n_k}, \sqrt{\frac{8-4\rho}{\rho}}x_{n_k}+\frac{2-2\rho}{\rho}y_{n_k}\right\rangle\left\langle By_{n_k}, \sqrt{\frac{8-4\rho}{\rho}}x_{n_k}+\frac{2-2\rho}{\rho}y_{n_k}\right\rangle\right),
\end{align*}
\endgroup
and therefore by \eqref{I.4.T.003}, \eqref{I.5.T.003} and \eqref{I.7.T.003} we conclude that
\begingroup\makeatletter\def\f@size{11}\check@mathfonts
\begin{align*}
w^2_{\rho}(A+\lambda B)\geq w^2_{\rho}(A)+w^2_{\rho}(B)+2w_{\rho}(A)w_{\rho}(B).
\end{align*}
\endgroup
Hence, $w_{\rho}(A+\lambda B)= w_{\rho}(A) + w_{\rho}(B)$, that is, $A\parallel_{w_{\rho}}B$.
\end{proof}
If $\mathcal{H}$ be finite dimensional, then we get a tractable characterization
of the $w_{\rho}$-parallelism as follows.
\begin{corollary}\label{C.003.1}
Let $\mathcal{H}$ be a finite dimensional Hilbert space, and let $A, B \in \mathbb{B}(\mathcal{H})$.
For every $\rho\in (0, 2]$, the following conditions are equivalent:
\begin{itemize}
\item[(i)] $A\parallel_{w_{\rho}}B$,
\item[(ii)] there exists a vector $\begin{bmatrix}
x\\
y
\end{bmatrix}\in \mathbf{S}_{\mathcal{H}\oplus \mathcal{H}}$ such that
\begingroup\makeatletter\def\f@size{10}\check@mathfonts
\begin{align*}
\left|\left\langle Ay, \sqrt{\frac{8-4\rho}{\rho}}x+\frac{2-2\rho}{\rho}y\right\rangle\left\langle By, \sqrt{\frac{8-4\rho}{\rho}}x+\frac{2-2\rho}{\rho}y\right\rangle\right|=w_{\rho}(A)w_{\rho}(B).
\end{align*}
\endgroup
\end{itemize}
\end{corollary}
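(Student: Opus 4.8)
The plan is to derive Corollary \ref{C.003.1} from Theorem \ref{T.003} in precisely the same way that Corollary \ref{C.002.1} was obtained from Theorem \ref{T.002}; the only new ingredient is the compactness of the closed unit ball of a finite-dimensional Hilbert space. Accordingly, I would treat the two implications separately, with all the genuine content sitting in the direction (i)$\Rightarrow$(ii).

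For (i)$\Rightarrow$(ii), I would begin with $A\parallel_{w_{\rho}}B$ and invoke Theorem \ref{T.003} to produce a sequence $\left\{\begin{bmatrix} x_n\\ y_n \end{bmatrix}\right\}$ in $\mathbf{S}_{\mathcal{H}\oplus \mathcal{H}}$ along which $\left|\left\langle Ay_n, \sqrt{\frac{8-4\rho}{\rho}}x_n+\frac{2-2\rho}{\rho}y_n\right\rangle\left\langle By_n, \sqrt{\frac{8-4\rho}{\rho}}x_n+\frac{2-2\rho}{\rho}y_n\right\rangle\right|$ tends to $w_{\rho}(A)w_{\rho}(B)$. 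Since $\dim\mathcal{H}<\infty$, the set $\mathbf{S}_{\mathcal{H}\oplus \mathcal{H}}$ is compact, so by the Bolzano--Weierstrass theorem there is a subsequence $\left\{\begin{bmatrix} x_{n_k}\\ y_{n_k} \end{bmatrix}\right\}$ converging to some $\begin{bmatrix} x\\ y \end{bmatrix}\in \mathbf{S}_{\mathcal{H}\oplus \mathcal{H}}$. The functional $\begin{bmatrix} u\\ v \end{bmatrix}\mapsto \left\langle Av, \sqrt{\frac{8-4\rho}{\rho}}u+\frac{2-2\rho}{\rho}v\right\rangle$ is continuous (it is a fixed inner product composed with bounded linear maps), as is the analogous functional built from $B$ and the modulus of their product. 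Passing to the limit along the convergent subsequence in the displayed identity of Theorem \ref{T.003}(ii) then transfers the equality from the sequence to the single vector $\begin{bmatrix} x\\ y \end{bmatrix}$, which is exactly condition (ii) of the corollary.

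The reverse implication (ii)$\Rightarrow$(i) is immediate: given a vector $\begin{bmatrix} x\\ y \end{bmatrix}$ as in (ii), the constant sequence $\begin{bmatrix} x_n\\ y_n \end{bmatrix}=\begin{bmatrix} x\\ y \end{bmatrix}$ satisfies condition (ii) of Theorem \ref{T.003} trivially, whence $A\parallel_{w_{\rho}}B$. I do not expect any serious obstacle in this argument; the one point worth flagging is that compactness is precisely what upgrades the limiting (approximately maximizing) sequence into a genuine maximizing vector, a step that has no analogue in general infinite dimensions and is the sole reason the hypothesis $\dim\mathcal{H}<\infty$ is needed.
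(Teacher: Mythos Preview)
Your argument is correct and follows exactly the paper's approach: the paper simply states that the proof mirrors that of Corollary~\ref{C.002.1} with Theorem~\ref{T.003} in place of Theorem~\ref{T.002}, and your compactness-plus-continuity extraction of a limit vector (together with the constant-sequence trick for the converse) is precisely that argument spelled out.
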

\begin{proof}
The proof is similar to the proof of Corollary \ref{C.002.1}, only we use Theorem \ref{T.003} instead of Theorem \ref{T.002}.
We omit the details.
\end{proof}
The next result that was proved in \cite[Theorem~2.2]{Meh.Amy.Zam.BIMS.2020} (see also \cite[Proposition~3.6]{Ab.Kit.SM.2015})
follows immediately from Theorem \ref{T.003} for $\rho=2$
\begin{corollary}\label{C.003.2}
Let $A, B \in \mathbb{B}(\mathcal{H})$. The following conditions are equivalent:
\begin{itemize}
\item[(i)] $A\parallel_{w}B$,
\item[(ii)] there exists a sequence $\{y_n\}$ in $\mathbf{S}_{\mathcal{H}}$ such that
\begin{align*}
\displaystyle{\lim_{n\rightarrow \infty}}\left|\left\langle Ay_n, y_n\right\rangle\left\langle By_n, y_n\right\rangle\right|=w(A)w(B).
\end{align*}
\end{itemize}
\end{corollary}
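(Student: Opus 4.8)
The plan is to specialize Theorem~\ref{T.003} to the case $\rho = 2$ and then to reconcile the two ambient unit balls. First I would record that when $\rho = 2$ one has $w_{\rho}(\cdot) = w_2(\cdot) = w(\cdot)$, together with the two coefficient simplifications
\[
\sqrt{\frac{8 - 4\rho}{\rho}} = \sqrt{\frac{8 - 8}{2}} = 0, \qquad \frac{2 - 2\rho}{\rho} = \frac{2 - 4}{2} = -1.
\]
Consequently the auxiliary vector that appears throughout Theorem~\ref{T.003}, namely $\sqrt{\frac{8-4\rho}{\rho}}\,x_n + \frac{2-2\rho}{\rho}\,y_n$, collapses to $-y_n$, in which the first component $x_n$ disappears entirely.

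Next I would substitute this into condition (ii) of Theorem~\ref{T.003}. Since $\langle Ay_n, -y_n\rangle = -\langle Ay_n, y_n\rangle$ and likewise for $B$, and since the modulus of a product is insensitive to these signs, the limit condition of Theorem~\ref{T.003} becomes
\[
\lim_{n\to\infty} \left| \langle Ay_n, y_n\rangle \, \langle By_n, y_n\rangle \right| = w(A)\,w(B),
\]
which is precisely the condition in part (ii) of the corollary, except that the witnessing sequence supplied by Theorem~\ref{T.003} lives in $\mathbf{S}_{\mathcal{H}\oplus\mathcal{H}}$ rather than in $\mathbf{S}_{\mathcal{H}}$.

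The only point requiring care, and the closest thing to an obstacle in an otherwise immediate deduction, is this mismatch between the two unit balls. For (i)$\Rightarrow$(ii) I would start from the sequence $\left\{\begin{bmatrix} x_n \\ y_n\end{bmatrix}\right\}$ given by Theorem~\ref{T.003} and simply read off its second components $\{y_n\}$; since $\|y_n\| \leq \sqrt{\|x_n\|^2 + \|y_n\|^2} \leq 1$, one has $y_n \in \mathbf{S}_{\mathcal{H}}$, and the displayed limit transfers verbatim. For the converse (ii)$\Rightarrow$(i) I would pad each $y_n$ with a zero first component, forming $\begin{bmatrix} 0 \\ y_n \end{bmatrix}$; because $\|y_n\| \leq 1$ this lies in $\mathbf{S}_{\mathcal{H}\oplus\mathcal{H}}$, and at $\rho = 2$ the Theorem~\ref{T.003} condition for this padded sequence is identical to the hypothesis of (ii). Invoking the equivalence (i)$\Leftrightarrow$(ii) of Theorem~\ref{T.003} then yields $A \parallel_{w} B$, completing the argument. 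I expect no genuine difficulty beyond this bookkeeping, since the entire content of the corollary is the vanishing of the $x_n$-coefficient at $\rho = 2$.
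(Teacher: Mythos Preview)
Your proposal is correct and follows exactly the paper's approach: the paper simply says the result ``follows immediately from Theorem~\ref{T.003} for $\rho=2$,'' and your computation of the coefficients together with the unit-ball bookkeeping spells out precisely what that immediate deduction entails. The extra detail you supply about passing between $\mathbf{S}_{\mathcal{H}\oplus\mathcal{H}}$ and $\mathbf{S}_{\mathcal{H}}$ is sound and merely makes explicit what the paper leaves to the reader.
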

We close this paper with the following result.
\begin{corollary}\label{C.003.3}
Let $A, B \in \mathbb{B}(\mathcal{H})$. The following conditions are equivalent:
\begin{itemize}
\item[(i)] $A\parallel B$,
\item[(ii)] there exists a sequence $\left\{\begin{bmatrix}
x_n\\
y_n
\end{bmatrix}\right\}$ in $\mathbf{S}_{\mathcal{H}\oplus \mathcal{H}}$ such that
$\displaystyle{\lim_{n\rightarrow \infty}}\left|\left\langle Ay_n, x_n\right\rangle
\left\langle By_n, x_n\right\rangle\right|= \frac{1}{4}\|A\|\|B\|$,
\item[(iii)] there exists a sequence $\{z_n\}$ in $\mathbf{S}_{\mathcal{H}}$ such that
$\displaystyle{\lim_{n\rightarrow \infty}}\left|\left\langle Az_n, Bz_n\right\rangle\right|= \|A\|\|B\|$.
\end{itemize}
\end{corollary}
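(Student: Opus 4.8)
The plan is to establish the cycle (i)$\Leftrightarrow$(ii)$\Rightarrow$(iii)$\Rightarrow$(i), mirroring the structure of Corollary \ref{C.002.3}; the first equivalence will come for free from the main theorem, and the real work lies in the two remaining implications. For (i)$\Leftrightarrow$(ii) I would specialize Theorem \ref{T.003} to $\rho=1$: then $w_1(\cdot)=\|\cdot\|$, $\parallel_{w_1}$ is ordinary operator-norm parallelism, $\sqrt{(8-4\rho)/\rho}=2$ and $(2-2\rho)/\rho=0$, so the vector $\sqrt{(8-4\rho)/\rho}\,x_n+\frac{2-2\rho}{\rho}\,y_n$ degenerates to $2x_n$. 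The identity in Theorem \ref{T.003}(ii) then becomes $\lim_n 4\,|\langle Ay_n,x_n\rangle\langle By_n,x_n\rangle|=\|A\|\|B\|$, which is precisely condition (ii).

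For (ii)$\Rightarrow$(iii) I would discard the trivial case $A=0$ or $B=0$ and extract asymptotic tightness from
\[
|\langle Ay_n,x_n\rangle\langle By_n,x_n\rangle|\leq \|A\|\|B\|\,\|x_n\|^2\|y_n\|^2\leq \tfrac14\|A\|\|B\|,
\]
where the last inequality is AM--GM applied to $\|x_n\|^2+\|y_n\|^2\leq1$. Since the left-hand side tends to the extremal value $\tfrac14\|A\|\|B\|$, every inequality must saturate in the limit. Writing each side as a product of nonnegative factors dominated by the corresponding factors on the right, I would invoke the elementary fact that if $p_n,q_n\in[0,1]$ and $p_nq_n\to1$ then $p_n,q_n\to1$, to conclude simultaneously that $\|x_n\|^2,\|y_n\|^2\to\tfrac12$, that the two Cauchy--Schwarz estimates $|\langle Ay_n,x_n\rangle|\leq\|Ay_n\|\|x_n\|\leq\|A\|\|y_n\|\|x_n\|$ (and the analogue for $B$) become equalities in the limit, and hence that $\|Ay_n\|\to\|A\|\|y_n\|$, $\|By_n\|\to\|B\|\|y_n\|$ with $Ay_n$ and $By_n$ both aligned asymptotically with $x_n$ (up to unimodular phases). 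Putting $z_n:=2\|x_n\|y_n$ gives $\|z_n\|=2\|x_n\|\|y_n\|\leq1$, so $z_n\in\mathbf{S}_{\mathcal H}$; the alignment then yields $|\langle Az_n,Bz_n\rangle|=4\|x_n\|^2|\langle Ay_n,By_n\rangle|\to \|A\|\|B\|$, which is (iii).

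For (iii)$\Rightarrow$(i) I would argue directly in Bhatia--\v{S}emrl fashion. From $|\langle Az_n,Bz_n\rangle|\leq\|Az_n\|\|Bz_n\|\leq\|A\|\|B\|$ and the hypothesis I get $\|Az_n\|\to\|A\|$ and $\|Bz_n\|\to\|B\|$. Setting $\lambda_n:=\langle Az_n,Bz_n\rangle/|\langle Az_n,Bz_n\rangle|\in\mathbb{T}$ and passing to a subsequence with $\lambda_{n_k}\to\lambda\in\mathbb{T}$ (compactness of $\mathbb{T}$), the expansion $\|(A+\lambda B)z_{n_k}\|^2=\|Az_{n_k}\|^2+2\,{\rm Re}\big(\overline{\lambda}\langle Az_{n_k},Bz_{n_k}\rangle\big)+\|Bz_{n_k}\|^2$ converges to $(\|A\|+\|B\|)^2$; since $\|z_{n_k}\|\leq1$ this forces $\|A+\lambda B\|\geq\|A\|+\|B\|$, and the triangle inequality gives equality, i.e.\ $A\parallel B$.

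I expect the main obstacle to be the bookkeeping in (ii)$\Rightarrow$(iii): one must justify that saturation of the \emph{product} of bounds forces saturation of \emph{each} individual bound, so that the separate conclusions about $\|x_n\|,\|y_n\|$ and the two Cauchy--Schwarz alignments all follow, and then convert the alignment of $Ay_n,By_n$ with $x_n$ into $|\langle Ay_n,By_n\rangle|\to\tfrac12\|A\|\|B\|$ while carefully tracking the unimodular phases. The remaining steps are routine.
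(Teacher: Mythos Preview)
Your argument is correct, and for (i)$\Leftrightarrow$(ii) and (iii)$\Rightarrow$(i) it matches the paper (the paper in fact omits (iii)$\Rightarrow$(i) as straightforward, and your expansion is the expected one). The genuine difference is in (ii)$\Rightarrow$(iii). The paper defines $z_n:=2\|x_n\|y_n$ exactly as you do, but then bypasses the saturation/alignment bookkeeping entirely by invoking the Buzano inequality: for the unit vector $e_n:=x_n/\|x_n\|$,
\[
4\big|\langle Ay_n,x_n\rangle\langle By_n,x_n\rangle\big|
=\big|\langle Az_n,e_n\rangle\langle e_n,Bz_n\rangle\big|
\leq \frac{\|Az_n\|\|Bz_n\|+|\langle Az_n,Bz_n\rangle|}{2}
\leq \|A\|\|B\|,
\]
which immediately sandwiches $|\langle Az_n,Bz_n\rangle|$ between two quantities both tending to $\|A\|\|B\|$. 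Your route instead extracts from the chain $|\langle Ay_n,x_n\rangle\langle By_n,x_n\rangle|\leq\|A\|\|B\|\|x_n\|^2\|y_n\|^2\leq\tfrac14\|A\|\|B\|$ the asymptotic saturation of every Cauchy--Schwarz link, deduces that $Ay_n/\|Ay_n\|$ and $By_n/\|By_n\|$ are each within $o(1)$ of a unimodular multiple of $x_n/\|x_n\|$, and then pairs them to get $|\langle Ay_n,By_n\rangle|/(\|Ay_n\|\|By_n\|)\to1$. This is sound (and you have correctly identified the phase-tracking as the fiddly part), but it is longer; the Buzano inequality packages the same geometry into a single line. On the other hand, your approach is self-contained and does not rely on an external inequality, which has its own appeal.
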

\begin{proof}
(i)$\Rightarrow$(ii) This implication follows immediately from Theorem \ref{T.003} for $\rho=1$.

(ii)$\Rightarrow$(iii) Suppose (ii) holds. Put $z_n:=2\|x_n\|y_n$. Then, by the arithmetic-geometric mean
inequality, we have $\|z_n\|=2\|x_n\|\|y_n\|\leq \|x_n\|^2+\|y_n\|^2\leq1$. Thus, $z_n\in\mathbf{S}_{\mathcal{H}}$.
Also, for $x_n\neq 0$, by the Buzano inequality (see \cite{Buz}) we have
\begin{align*}
4\left|\left\langle Ay_n, x_n\right\rangle\left\langle By_n, x_n\right\rangle\right|&=
\left|\left\langle A(2\|x_n\|y_n), \frac{x_n}{\|x_n\|}\right\rangle\left\langle \frac{x_n}{\|x_n\|}, B(2\|x_n\|y_n)\right\rangle\right|
\\&\leq\frac{\|Az_n\|\|Bz_n\|+\left|\langle Az_n, Bz_n\rangle\right|}{2}
\\&\leq\frac{\|A\|\|B\|+\left|\langle Az_n, Bz_n\rangle\right|}{2}
\\&\leq\frac{\|A\|\|B\|+\|Az_n\|\|Bz_n\|}{2}
\leq\|A\|\|B\|,
\end{align*}
and so
\begin{align}\label{I.1.C.003.3}
4\left|\left\langle Ay_n, x_n\right\rangle\left\langle By_n, x_n\right\rangle\right|\leq\frac{\|A\|\|B\|+\left|\langle Az_n, Bz_n\rangle\right|}{2}
\leq\|A\|\|B\|.
\end{align}
Clearly, \eqref{I.1.C.003.3} holds also when $x_n = 0$.
Now, by letting $n\rightarrow \infty$ in \eqref{I.1.C.003.3}, we obtain $\displaystyle{\lim_{n\rightarrow \infty}}\left|\left\langle Az_n, Bz_n\right\rangle\right|= \|A\|\|B\|$.

(iii)$\Rightarrow$(i) The proof is straightforward and is omitted.
\end{proof}
\begin{remark}\label{R.003.4}
The equivalence (i)$\Leftrightarrow$(iii) of Corollary \ref{C.003.3} already stated in \cite[Theorem~3.3]{Z.M.C.M.B.2015}.
\end{remark}
\textbf{Conflict of interest.} The authors state that there is no conflict of interest.

\textbf{Data availability.} Data sharing not applicable to the present paper as no data sets were generated or analyzed during the current study.
\bibliographystyle{amsplain}

\end{document}